\newtheorem{theorem}{Theorem}[section]
\newtheorem{lemma}[theorem]{Lemma}
\newtheorem{notation}[theorem]{Notation}
\newtheorem{conjecture}[theorem]{Conjecture}
\newtheorem{corollary}[theorem]{Corollary}%[section]
\newtheorem{remark}[theorem]{Remark}%[section]
\newtheorem{void}[theorem]{}
\newtheorem{definition}[theorem]{Definition}%[section]
\newtheorem{proposition}[theorem]{Proposition}%[section]
\def\IBr{{\rm IBr}}
\def\Irr{{\rm Irr}}
\def\br{{\rm br}}
\def\Aut{{\rm Aut}}
\def\Out{{\rm Out}}
\def\Gal{{\rm Gal}}
\def\F{\mathbb{F}}
\def\Z{\mathbb{Z}}
\def\Proj{{\rm Proj}}
\def\DD{D^\Delta}
\def\R{\mathcal{R}}
\def\0{\bar{0}}
\def\1{\bar{1}}
\def\bF{\overline{\F}}
\def\FF{\mathcal{F}}
\def\W{\mathcal{W}}
\def\exp{{\rm exp}}
\def\S{\mathcal{S}}
\def\N{\mathcal{N}}
\def\Proj{\mathrm{Proj}}
\def\PIM{\mathrm{PIM}}
\def\Inn{\mathrm{Inn}}
\def\un{\mathbf{1}}
\newcommand{\lexp}[2]{\setbox0=\hbox{$#2$} \setbox1=\vbox to
	\ht0{}\,\box1^{#1}\!#2}
\def\ps@pprintTitle{%
\let\@oddhead\@empty
\let\@evenhead\@empty
\def\@oddfoot{\reset@font\hfil\thepage\hfil}
\let\@evenfoot\@oddfoot
}
\begin{document}

\begin{frontmatter}

\title{Alperin's weight conjecture, Galois automorphisms, alternating sums, and functorial equivalences}

\author[label1]{Xin Huang}%\fnref{label3}}  %\corref{cor1}  correspondence auth.
\ead{xinhuang@mails.ccnu.edu.cn}

\address[label1]{School of Mathematics and Statistics, Central China Normal University, Wuhan 430079, China}

\author[label2]{Deniz Y\i lmaz}%\fnref{label3}}  %\corref{cor1}  correspondence auth.
\ead{d.yilmaz@bilkent.edu.tr}

\address[label2]{Department of Mathematics, Bilkent University, Ankara 06800, Turkey}

\begin{abstract}
We show that functorial equivalences can offer new insight into the blockwise Galois Alperin weight conjecture (BGAWC). Inspired by Kn\"orr and Robinson's work, we first formulate the BGAWC in terms of alternating sums indexed by chains of $p$-subgroups, and we also give a functorial reformulation in the Grothendieck group of diagonal $p$-permutation functors. We prove that these formulations are equivalent. We further show that if a functorial equivalence between a block with abelian defect group and its Brauer correspondent descends to the minimal field of the block, then the BGAWC holds for that block. Finally, we prove that Galois conjugate blocks are functorially equivalent over an algebraically closed field of characteristic zero.
\end{abstract}

\begin{keyword}
%% keywords here, in the form: keyword \sep keyword
blocks of group algebras \sep Alperin's weight conjecture \sep Navarro's conjecture \sep Galois automorphisms \sep alternating sums \sep funtorial equivalences
%% MSC codes here, in the form: \MSC code \sep code
%% or \MSC[2008] code \sep code (2000 is the default)
\end{keyword}

\end{frontmatter}

%%
%% Start line numbering here if you want
%%
% \linenumbers

%% main text
\section{Introduction}\label{s1}

%Local-global conjectures are one of the most important type of conjectures in representation theory of finite groups. 
Recently, a substantial amount of literature has been devoted to refinements of the classical local-global counting conjectures taking Galois automorphisms into account. Examples include the Navarro/Galois Alperin--McKay conjecture \cite[Conjecture B]{Nav04}, and the blockwise Navarro/Galois Alperin weight conjecture (see e.g. \cite[Conjecture 2]{H24}). In \cite{BY22}, Bouc and Y\i lmaz introduced the notion of functorial equivalences between blocks. Functorial equivalences have already proved useful in the study of several problems in the block theory of finite groups. For example, in \cite[Theorem 10.6]{BY22}, Bouc and Y\i lmaz investigated the Donovan conjecture from the point of view of functorial equivalences, while in \cite{BBY}, Boltje, Bouc and Y\i lmaz applied the same perspective to Alperin's weight conjecture. In this paper, we show that functorial equivalences may also shed light on the blockwise Galois Alperin weight conjecture (BGAWC) in several ways. To be more precise, we first fix some notation:

\begin{notation}
	{\rm Throughout this paper $p$ is a prime, $\bF_p$ denotes a fixed algebraic closure of $\F_p$, and $\Gamma:=\Gal(\bF_p/\F_p)$. Let $G$ be a group. We denote by $\exp(G)$ the smallest positive integer $n$ (possibly $n=+\infty$) such that $g^n=1$ for all $g\in G$.	If $X$ is a left $G$-set, the stabiliser of an element $x\in X$ is denoted by $G_x$. For any subgroup $H$ of $G$, $X^H$ denotes the set of fixed points of $H$ on $X$, that is, $X^H=\{x\in X\mid hx=x,~\forall~h\in H\}$. By $[G\backslash X]$ we denote a set of representatives of the $G$-orbits of $X$. We denote by $\FF_G$ the set of $p$-subgroups of $G$.  For any field $k$ of characteristic $p$ and any $P\in \FF_G$, we denote by $\br_P^{kG}$ (or $\br_P$) the $P$-Brauer homomorphism $kG\to kC_G(P)$ sending $\sum_{g\in G}\alpha_g g$ to $\sum_{g\in C_G(P)}\alpha_g g$, where $\alpha_g\in k$. A {\it block} of $kG$ is a central primitive idempotent $b$ of $kG$. For any positive integer $n$, we denote by $n_p$ the $p$-part of $n$ and by $n_{p'}$ the $p'$-part of $n$. Unless otherwise specified, all modules are left modules.
	}
\end{notation}

Let $G$ be a finite group and $b$ a central idempotent of $\bF_p G$; we use the convention that $0$ is a central idempotent. Let $\IBr(G,b)$ be the set of characters of simple $\bF_pGb$-modules, and $\W(G,b)$ the set of characters of simple projective $\bF_pGb$-modules; we use the convention that $\IBr(G,0)=\varnothing=\W(G,0)$. The stabiliser $\Gamma_b$ of $b$ in $\Gamma$ acts naturally on both sets $\IBr(G,b)$ and $\W(G,b)$; see Notation \ref{notation:characters of groups} below. The BGAWC can be stated as follows:

\begin{conjecture}[{\cite{Nav04}},{\cite{Turull14}}; cf.\ {\cite[Conjecture 2]{H24}}]\label{conj: GAWC original version}
	Let $G$ be a finite group and $b$ a block of $\bF_p G$. Then there exists a bijection 
	$$\IBr(G,b)\to \bigsqcup_{P\in[G\backslash\FF_G]} \W(N_G(P)/P,\bar{b}_P)$$ 
	commuting with the action of $\Gamma_b$, where $\bar{b}_P$ is the image in $\bF_p[N_G(P)/P]$ of $b_P={\rm br}_P^{\bF_p G}(b)$, regarded as a sum of blocks of $\bF_p[N_G(P)/P]$.
\end{conjecture}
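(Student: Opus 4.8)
Since Conjecture \ref{conj: GAWC original version} is open in general, I outline a strategy rather than a direct proof: reduce the conjecture to a $\Gamma_b$-equivariant alternating sum over chains of $p$-subgroups, lift that sum to the Grothendieck group of diagonal $p$-permutation functors, and then exploit functorial equivalences — when they exist and can be realised over the minimal field of the block — to verify the resulting identity.

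The first step, in the spirit of Kn\"orr and Robinson, is to replace the single $\Gamma_b$-equivariant bijection demanded by the conjecture with an alternating sum identity of the shape
\[
|\IBr(G,b)| \;=\; \sum_{\sigma} (-1)^{|\sigma|}\, \bigl|\W\bigl(N_G(\sigma)/P_\sigma,\bar b_\sigma\bigr)\bigr| ,
\]
where $\sigma$ runs over $G$-conjugacy classes of chains $P_0<P_1<\cdots<P_n$ of $p$-subgroups, $P_\sigma=P_n$, and $N_G(\sigma)$ is the stabiliser of the chain. The equivalence of the two formulations follows from the usual M\"obius inversion / Euler-characteristic argument on the simplicial complex of chains; the point is that $\Gamma_b$ acts compatibly on every term, so carrying that argument out $\Gamma_b$-equivariantly upgrades the numerical identity to an identity of virtual $\Gamma_b$-sets — equivalently, of permutation characters of $\Gamma_b$ — which is precisely the Galois refinement. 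I would then pass to the functorial level: the class of $\bF_p Gb$ in the Grothendieck group of diagonal $p$-permutation functors encodes, through its evaluations at finite groups, exactly the local data (Brauer pairs and the multiplicities of simple and of simple projective modules at each local section) that feed the alternating sum, while chains of $p$-subgroups are what those evaluations detect; hence the alternating sum depends only on this functorial class, and a functorial equivalence between two blocks forces their alternating sums to agree term by term. This yields the functorial reformulation together with its equivalence to the previous two statements.

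To deduce the conjecture for a concrete block $b$ with abelian defect group, the plan is to invoke a functorial equivalence between $\bF_p Gb$ and its Brauer correspondent; if this equivalence descends to the minimal field of $b$, it is $\Gamma_b$-equivariant, so the $\Gamma_b$-equivariant alternating sum for $b$ coincides with that of the much simpler local block, for which BGAWC is already known — whence it holds for $b$. The complementary input, that Galois conjugate blocks are functorially equivalent over an algebraically closed field of characteristic zero, is what singles out the descent hypothesis as the only missing ingredient, since it pins down how $\Gamma_b$ acts at the functorial level. The main obstacle is exactly this descent / field-of-definition problem: even granting a functorial equivalence between $b$ and a local block — itself known only for special classes of blocks — one must show it can be chosen $\Gamma_b$-equivariantly, i.e. realised over the minimal field of the block, and controlling the rationality of functorial equivalences is the delicate point where the Galois hypotheses must be fed in. For this reason the realistic outcome is the chain of equivalences above, together with the conditional and the Galois-conjugacy results, rather than an unconditional proof of Conjecture \ref{conj: GAWC original version}.
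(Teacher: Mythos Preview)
The statement is a conjecture; the paper does not prove it, and you correctly acknowledge this by offering a strategy rather than a proof. Your outline captures the paper's program in broad strokes --- a Galois-refined Kn\"orr--Robinson reformulation, a functorial lift, a conditional result for abelian defect via functorial equivalence over the minimal field, and the Galois-conjugate-blocks theorem --- but several concrete points diverge from what the paper actually does.

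First, your displayed alternating-sum identity is not the paper's: Theorem~\ref{thm Galois refinement} places $|\IBr(G_\tau,b_\tau)^T|$ inside the alternating sum and $|\W(G,b)^T|$ (or $0$, for positive defect) on the other side, not the reverse, and the $T$-fixed-point superscripts are essential in the formula, not just in the surrounding prose. Second, you assert that the equivalence of the bijection and alternating-sum formulations follows by running an Euler-characteristic/M\"obius argument $\Gamma_b$-equivariantly for the given block; the paper explicitly does \emph{not} achieve a block-by-block equivalence (see the remark immediately following Theorem~\ref{thm Galois refinement}) --- its proof proceeds by induction on $|G|$ and only shows that the statements are equivalent as universal claims over all finite groups and blocks simultaneously. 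Third, your route to the abelian-defect conditional (Corollary~\ref{cor BrouetoGaloisAlperin}) passes through the alternating sums, but the paper's proof bypasses them entirely: a functorial equivalence over the minimal field yields, via Theorem~\ref{theorem:funtorial equivalences and Brauer character bijections} (itself a specialization of the purely algebra-theoretic Theorem~\ref{theorem:Galois-equivariant bijections for general algebras}), a direct $\Gamma_b$-equivariant bijection $\IBr(G,b)\to\IBr(N_G(P),c)$, after which the standard identification $\IBr(N_G(P),c)\cong\W(N_G(P)/P,\bar b_P)$ finishes. Finally, your claim that a functorial equivalence ``forces their alternating sums to agree term by term'' is neither proved nor formulated in the paper, and it is not clear how one would even set it up, since chains of $p$-subgroups of $G$ and of $H$ carry no a priori correspondence under a functorial equivalence.
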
 

See \cite{,Turull14,H24,DH,DHZ,FFZ} for some recent results on Conjecture \ref{conj: GAWC original version}, and see Proposition \ref{prop:equivalent reformulations of BGAWC} below for further equivalent reformulations of Conjecture \ref{conj: GAWC original version}. In \cite{KR}, Kn\"orr and Robinson reformulated the blockwise Alperin weight conjecture (BAWC) in terms of alternating sums indexed by chains of $p$-subgroups. Their reformulation led to a wide range of more precise conjectures, including Dade's ordinary and projective conjectures \cite[Conjecture 6.12.9]{Lin18b} and Robinson's conjecture \cite[Conjecture 6.12.10]{Lin18b}. Kn\"orr and Robinson's reformulation also suggests the possibility of a structural explanation for the BAWC; see the beginning of \cite[Section 10.7]{Lin18b} for an exposition. For this reason, it may be useful to reformulate Conjecture \ref{conj: GAWC original version} in terms of alternating sums indexed by chains of $p$-subgroups as well. Again, we need some notation:

\begin{notation}\label{notation:chain}
	{\rm Let $G$ be a finite group and $b$ a block of $\bF_pG$. Denote by $\S_G$ the set of strictly ascending chains $\sigma=(1=P_0<P_1<\cdots<P_n)$ of $p$-subgroups of $G$, and by $\N_G$ the subset of $\S_G$ consisting of those chains $\sigma$ such that every $P_i$ is normal in the maximal term $P_n$. Denote by $G_\sigma$ the stabiliser in $G$ of $\sigma$; we have $G_\sigma=\cap_{i=0}^{n} N_G(P_i)$. For any $\sigma=(1=P_0<P_1<\cdots<P_n)\in \S_G$, we denote $|\sigma|:=n$ and denote $b_\sigma:={\rm br}_{P_n}^{\bF_p G}(b)$. Since $\br_{P_n}^{\bF_pG}$ commutes with the action of $N_G(P_n)$, hence the action of $G_\sigma$. It follows that $b_\sigma$ is an idempotent in $Z(\bF_pG_\sigma)$, possibly $0$.
}	
\end{notation}

We now provide a reformulation of Conjecture \ref{conj: GAWC original version} in terms of alternating sums - a refinement of Kn\"orr and Robinson's reformulation of the original BAWC.%blockwise Alperin weight conjecture.

\begin{theorem}[Galois refinement of {\cite[Theorem 3.8]{KR}} or {\cite[Theorem 10.7.6]{Lin18b}}]\label{thm Galois refinement}
	The following statements are equivalent:
	\begin{enumerate}[{\rm (i)}]
		\item For any finite group $G$ and any block $b$ of $\bF_pG$, Conjecture \ref{conj: GAWC original version} holds. %This is, by Proposition \ref{prop:equivalent reformulations of BGAWC}, that for any subgroup $T$ of $\Gamma_b$, we have 
		%	$$|\IBr(G,b)^T|=\sum_{P\in[G\backslash\FF_G]}|\W(N_G(P)/P,\bar{b}_P)^T|.$$
		\item For any finite group $G$, any block of $\bF_pG$ with a nontrivial defect group, and any subgroup $T$ of $\Gamma_b$, we have
		$$\sum_{\tau\in[G\backslash\S_G]}(-1)^{|\tau|}|\IBr(G_\tau,b_\tau)^T|=0,$$
		where $b_\tau$ is defined as in Notation \ref{notation:chain}.
		\item For any finite group $G$, any block of $\bF_pG$ with a nontrivial defect group, and any subgroup $T$ of $\Gamma_b$, we have
		$$\sum_{\tau\in[G\backslash\N_G]}(-1)^{|\tau|}|\IBr(G_\tau,b_\tau)^T|=0.$$
		\item For any finite group $G$, any idempotent $b$ of $Z(\bF_pG)$, and any subgroup $T$ of $\Gamma_b$, we have
		$$\sum_{\tau\in[G\backslash\N_G]}(-1)^{|\tau|}|\IBr(G_\tau,b_\tau)^T|=|\W(G,b)^T|.$$
		In particular, if $b$ is a block of $\bF_pG$ of defect zero, then the right-hand side equals $1$.
	\end{enumerate}
	
\end{theorem}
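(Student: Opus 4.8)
The overall strategy is to follow the structure of Knörr and Robinson's original argument (as presented in [Lin18b, Theorem 10.7.6]), carrying the $T$-fixed points along at every stage. The cleanest route is to prove the chain of implications $(i)\Rightarrow(iv)\Rightarrow(iii)\Rightarrow(ii)\Rightarrow(i)$, or possibly to establish $(ii)\Leftrightarrow(iii)\Leftrightarrow(iv)$ directly by elementary chain combinatorics and then connect to $(i)$.

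**Step 1: The combinatorial equivalences $(ii)\Leftrightarrow(iii)$ and the normalisation $(iii)\Leftrightarrow(iv)$.** First I would prove that for a fixed finite group $G$, fixed idempotent $b$ of $Z(\bF_pG)$, and fixed subgroup $T\le\Gamma_b$, the two alternating sums over $[G\backslash\S_G]$ and over $[G\backslash\N_G]$ agree. This is the Galois analogue of the standard reduction from all chains to normal chains; the point is that the map sending a chain $\sigma$ to its associated normal chain behaves well, and crucially the Brauer construction data $b_\tau$ and the group $G_\tau$ — hence the $T$-set $\IBr(G_\tau,b_\tau)$ and its fixed-point count — depend only on the relevant invariants preserved by this reduction. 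Here $T$ is simply a spectator, so the argument is formally identical to the non-Galois case once one checks that $T\le\Gamma_{b_\tau}$ so that $\IBr(G_\tau,b_\tau)^T$ makes sense (this follows since $\br_{P_n}$ commutes with $\Gamma$, so $\Gamma_b\le\Gamma_{b_\tau}$). For $(iii)\Leftrightarrow(iv)$: in statement $(iv)$ the chain $\tau=(1)$ of length $0$ contributes $|\IBr(G,b)^T|$, and the remaining terms are exactly those appearing in $(iii)$ (after isolating the trivial-defect and defect-zero bookkeeping), so $(iv)$ for $b$ says precisely that $|\IBr(G,b)^T| - (\text{sum over longer normal chains}) = |\W(G,b)^T|$; summing a suitable induction over blocks of smaller defect reduces this to the vanishing statement $(iii)$ for blocks of nontrivial defect. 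The defect-zero remark is immediate since then $\W(G,b)=\IBr(G,b)$ is a single $\Gamma_b$-orbit of size $1$ and $\S_G$ contributes only the trivial chain.

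**Step 2: Localise statement $(iv)$ and relate it to Conjecture \ref{conj: GAWC original version}.** The heart of the matter is to show $(i)\Leftrightarrow(iv)$. The standard Knörr–Robinson mechanism rewrites the alternating sum over $[G\backslash\N_G]$ by running over the maximal term $P$ of each normal chain: grouping chains by their top subgroup $P$ (up to $G$-conjugacy) turns the sum over $\N_G$ into $\sum_{P\in[G\backslash\FF_G]}$ of an alternating sum of $|\IBr|$ over chains of normal subgroups of $P$ inside $N_G(P)$, and by induction on $|G|$ (or on the defect) each such inner sum collapses — via statement $(iv)$ applied to the block idempotent $\bar b_P$ of $\bF_p[N_G(P)/P]$ — to $|\W(N_G(P)/P,\bar b_P)^T|$. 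Thus $(iv)$ for $b$ becomes $|\IBr(G,b)^T| = \sum_{P\in[G\backslash\FF_G]}|\W(N_G(P)/P,\bar b_P)^T|$ for all $T\le\Gamma_b$, which by Proposition \ref{prop:equivalent reformulations of BGAWC} is equivalent to the existence of a $\Gamma_b$-equivariant bijection, i.e.\ to Conjecture \ref{conj: GAWC original version} (the elementary fact that a bijection of finite $T'$-sets exists for all subgroups $T'$ of a fixed finite group iff it exists equivariantly for that group — a Möbius-inversion / counting argument on fixed points). Conversely, assuming $(i)$, one feeds the BGAWC bijections for all blocks of all $N_G(P)/P$ into the same chain identity to obtain $(iv)$.

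**Main obstacle.** The genuinely delicate point is the bookkeeping of the Galois action through the chain manipulations of Step 2 — specifically, verifying that when one regroups normal chains by their maximal term $P$ and passes to $N_G(P)/P$, the $\Gamma$-actions on $\IBr(G_\tau,b_\tau)$ are compatible with the deflation/inflation identifications, so that taking $T$-fixed points commutes with the whole rearrangement. One must check that $\Gamma_b$ acts on each relevant local piece through its image in $\Gamma_{b_\tau}$ and that the inflation map $\IBr(N_G(P)/P,\bar b_P)\to\IBr(N_G(P),b_P\cdot(\text{stuff}))$ is $\Gamma$-equivariant; these are not difficult but must be stated carefully since the entire theorem is about equivariance. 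The second subtlety is the induction in Step 2: one needs that passing to $N_G(P)/P$ for nontrivial $P$ genuinely decreases the relevant complexity (order or defect), and that the statement $(iv)$ — not merely $(ii)$ or $(iii)$ — is the right inductive hypothesis because it is the only one of the four that is nonvacuous for trivial-defect blocks and hence anchors the induction. Apart from these two points, the proof is a faithful Galois-equivariant transcription of [Lin18b, Theorem 10.7.6].
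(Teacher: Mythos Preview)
Your overall architecture is correct and matches the paper's: $(ii)\Leftrightarrow(iii)$ by the standard $\S_G$-to-$\N_G$ reduction (citing \cite[Proposition~3.3]{KR} or \cite[Proposition~8.13.13]{Lin18b}), $(iii)\Leftrightarrow(iv)$ by bookkeeping of defect-zero blocks, and the link to $(i)$ via induction and Proposition~\ref{prop:equivalent reformulations of BGAWC}. Your identification of the ``main obstacle'' --- checking that inflation $\IBr(N_G(P)/P,\bar b_P)\to\IBr(N_G(P),b_P)$ is $\Gamma$-equivariant so that $T$-fixed points are preserved throughout --- is exactly right and is what the paper isolates in its Lemmas~\ref{lemma: Galois refinement of 10.7.4} and~\ref{lemma: Galois refinement of 10.7.5}.

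There is, however, a genuine error in your Step~2. You write that one should group normal chains by their \emph{maximal} term $P$ and obtain an inner alternating sum over ``chains of normal subgroups of $P$ inside $N_G(P)$,'' to which $(iv)$ for $N_G(P)/P$ is then applied. This does not work: chains in $\N_G$ with top term $P$ correspond to chains of normal subgroups of $P$ \emph{below} $P$, whereas $(iv)$ for $(N_G(P)/P,\bar b_P)$ is a sum over $\N_{N_G(P)/P}$, i.e.\ chains of $p$-subgroups \emph{of the quotient} $N_G(P)/P$. There is no natural identification between these two families, and neither the stabilisers $G_\sigma$ nor the idempotents $b_\sigma$ match up. The Kn\"orr--Robinson mechanism (and the paper's proof) instead groups chains $\sigma=(1<P_1<\cdots<P_n)\in\N_G$ by their \emph{second} term $P=P_1$, then deletes $P_1$ and passes to the quotient chain $\bar\sigma=(1<P_2/P<\cdots<P_n/P)\in\N_{N_G(P)/P}$. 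The point is that $G_\sigma/P=(N_P)_{\bar\sigma}$ and the image of $b_\sigma$ in $\bF_p[N_G(P_n)/P]$ equals $(c_P)_{\bar\sigma}$ (this is the paper's Lemma~\ref{lemma: Galois refinement of 10.7.4}), so $|\IBr(G_\sigma,b_\sigma)^T|=|\IBr((N_P)_{\bar\sigma},(c_P)_{\bar\sigma})^T|$ and the inner sum over chains with second term $P$ becomes exactly the left-hand side of $(iv)$ for $(N_P,c_P)$, shifted in length by one. This is what makes the induction close. Once you replace ``top term'' by ``second term'' and adjust the description of the inner chains accordingly, your plan coincides with the paper's proof.

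A minor point: your $(iii)\Leftrightarrow(iv)$ is more complicated than necessary. No induction is needed there: simply decompose a central idempotent $b$ into blocks, note that $\W(G,b')=\varnothing$ for blocks $b'$ of positive defect while for defect-zero blocks the sum in $(iv)$ has only the trivial chain and equals $|\IBr(G,b')^T|=|\W(G,b')^T|=1$; hence $(iv)$ for idempotents is equivalent to $(iii)$ for positive-defect blocks.
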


Theorem \ref{thm Galois refinement} will be proved in Section~\ref{section: On Galois automorphisms and alternating sums}. We also introduce in Corollary~\ref{cor GaloisAlperinFunctorial} (ii) a functorial reformulation of the BGAWC, expressed in terms of diagonal $p$-permutation functors, and we show that this reformulation is equivalent to the alternating sum version in Theorem \ref{thm Galois refinement} (ii). This provides the first indication of how functorial equivalences (see \ref{void:notaions related to functorial equiavlences}) may shed light on Conjecture \ref{conj: GAWC original version}.

\begin{remark}
{\rm Similar to \cite[Remark~10.7.2]{Lin18b}, our proof of Theorem~\ref{thm Galois refinement} does not show that, for a given block, the validity of one of the statements~(i) or~(ii) implies the validity of the other. However, for a particular block, Theorem~\ref{thm Galois refinement} (ii) holds if and only if Corollary~\ref{cor GaloisAlperinFunctorial} (ii) holds for that block.
}
\end{remark}

\begin{definition}[cf. {\cite[Definition 1.8]{Kessar_Linckelmann}}]
	{\rm Let $G$ be a finite group and $b$ an idempotent in $Z(\bF_pG)$. Write $b=\sum_{g\in G}\alpha_gg$, where $\alpha_g\in \bF_p$. We denote by $\F_p[b]$ the smallest subfield of $\bF_p$ containing all the coefficients $\{\alpha_g\mid g\in G\}$ and call it the {\it minimal field} of $b$. By definition, for any subfield $k$ of $\bF_p$ containing $\F_p[b]$, we have $\Gal(k/\F_p)_b=\Gal(k/\F_p[b])$. 
	}
\end{definition}

For a second way in which functorial equivalences may offer insight into Conjecture \ref{conj: GAWC original version}, we prove the following theorem:

\begin{theorem}\label{theorem:funtorial equivalences and Brauer character bijections}
	Let $R$ be a commutative ring with $1$ containing $\Z$ as a unitary subring. Let $G$ and $H$ be finite groups, and let $b\in Z(\bF_pG)$ and $c\in Z(\bF_pH)$ be idempotents. Assume that $\F_p[b]=\F_p[c]$ and denote this common field by $k$. Assume further that the $k$-pairs $(G,b)$ and $(H,c)$ are functorially equivalent over $R$. Then there exists a bijection $I:\IBr(G,b)\to \IBr(H,c)$ such that $\lexp{\sigma}I(\varphi)=I(\lexp{\sigma}\varphi)$ for all $\varphi\in \IBr(G,b)$ and all $\sigma\in \Gamma_b=\Gal(\bF_p/k)$.
\end{theorem}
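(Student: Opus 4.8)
The plan is to realize the bijection $I$ by tensoring with a virtual $p$-permutation bimodule defined over the common minimal field $k=\F_p[b]=\F_p[c]$; since any such bimodule becomes fixed by $\Gamma_b=\Gal(\bF_p/k)$ after extension of scalars to $\bF_p$, the $\Gamma_b$-equivariance of $I$ will follow formally from the fact that Frobenius twists commute with tensor products. First I would unwind the hypothesis: as $\F_p[b]=k=\F_p[c]$ we have $b\in Z(kG)$ and $c\in Z(kH)$, so $(G,b)$ and $(H,c)$ are genuine $k$-pairs, and a functorial equivalence over $R$ between them is — by definition (cf.\ \ref{void:notaions related to functorial equiavlences}) and Yoneda's lemma — a pair of mutually inverse elements $\omega$ and $\omega^{\circ}$, lying respectively in the $R$-linearised Grothendieck groups of $p$-permutation $(kH,kG)$- and $(kG,kH)$-bimodules with twisted diagonal vertices, satisfying $\omega=c\,\omega\,b$, $\omega^{\circ}=b\,\omega^{\circ}c$, $\omega^{\circ}\cdot\omega=[kGb]$ and $\omega\cdot\omega^{\circ}=[kHc]$.

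Next I would extend scalars along $k\hookrightarrow\bF_p$. Extension of scalars sends a $p$-permutation bimodule with twisted diagonal vertices to another of the same kind, and commutes with tensor products over the middle term; hence $\widetilde{\omega}:=\bF_p\otimes_k\omega$ and $\widetilde{\omega}^{\circ}:=\bF_p\otimes_k\omega^{\circ}$ are again mutually inverse (note that $b$ and $c$ are unchanged). The crucial observation is that, because every $\sigma\in\Gamma_b=\Gal(\bF_p/k)$ restricts to the identity on $k$, the Frobenius twist satisfies $\lexp{\sigma}{\widetilde{\omega}}\cong\widetilde{\omega}$ and $\lexp{\sigma}{\widetilde{\omega}^{\circ}}\cong\widetilde{\omega}^{\circ}$ for all such $\sigma$; in other words $\widetilde{\omega}$ is a $\Gamma_b$-fixed virtual $p$-permutation bimodule witnessing a functorial equivalence between $(G,b)$ and $(H,c)$ over $\bF_p$. (Here one uses $\F_p[b]=\F_p[c]$ a second time, now to see that $\Gamma_b=\Gamma_c$, so that the $\Gamma_b$-action on $\IBr(H,c)$ appearing in the statement indeed makes sense.)

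Finally I would extract the bijection and verify equivariance. By the work of Boltje--Bouc--Y\i lmaz \cite{BBY} (see also \cite{BY22}), a functorial equivalence over $R$ between $(G,b)$ and $(H,c)$ yields a bijection $I\colon\IBr(G,b)\to\IBr(H,c)$; inspecting its construction, $I$ is obtained from the realizing virtual bimodule $\widetilde{\omega}$ — more precisely from the family of Brauer constructions of $\widetilde{\omega}$ at twisted diagonal $p$-subgroups and the induced maps on the relevant Grothendieck groups — with inverse coming from $\widetilde{\omega}^{\circ}$. Every ingredient of this recipe (induction, restriction, the Brauer construction, and tensoring with $\widetilde{\omega}$) commutes with the Frobenius twist by any $\sigma\in\Gamma_b$ applied to $\bF_p$-modules; combining this with $\lexp{\sigma}{\widetilde{\omega}}\cong\widetilde{\omega}$ shows that $\lexp{\sigma}I(\varphi)=I(\lexp{\sigma}\varphi)$ for all $\varphi\in\IBr(G,b)$ and $\sigma\in\Gamma_b$, as required. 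The step I expect to be the main obstacle is exactly this last one: one must know not merely that \cite{BBY} produces \emph{some} bijection, but that the bijection can be taken to depend only on the underlying isomorphism in the diagonal $p$-permutation category, compatibly with extension of scalars and with arbitrary ring automorphisms of $\bF_p$ fixing $k$ — i.e.\ that the passage ``virtual $p$-permutation bimodule $\leadsto$ bijection of Brauer characters'' is natural. If a citation of \cite{BBY} does not visibly yield this naturality, the fallback is to re-run the relevant argument of \cite{BBY} while carrying the $\Gamma_b$-action throughout; in any case the positivity/triangularity input of \cite{BBY} — which guarantees that the isomorphism of Grothendieck groups really restricts to a bijection of the $\IBr$-bases — remains the substantive point, and the Galois bookkeeping added here is then routine.
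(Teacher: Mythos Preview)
Your first two steps are exactly right and match the paper: unpack the functorial equivalence into mutually inverse elements $\omega,\omega^\circ$ over $k$, extend scalars, and observe that the resulting bimodules are $\Gamma_b$-fixed. The gap is in your third step. A functorial equivalence over a general ring $R$ does \emph{not} give a canonical bijection $\IBr(G,b)\to\IBr(H,c)$: tensoring with $\widetilde\omega$ induces only an $R$-linear isomorphism $\Phi_{\widetilde\omega}$ between the Grothendieck groups $R\otimes_\Z\R(\bF_pGb)$ and $R\otimes_\Z\R(\bF_pHc)$, and there is no reason for $\Phi_{\widetilde\omega}$ to send simples to simples. The ``positivity/triangularity input of \cite{BBY}'' you hope for is not there in this generality --- \cite{BBY} and \cite{BY22} deduce $|\IBr(G,b)|=|\IBr(H,c)|$ by other means (multiplicities of simple functors), not by producing a natural bijection from $\widetilde\omega$. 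So neither the direct citation nor your proposed fallback of ``re-running the argument'' closes the gap.

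The paper instead uses a counting trick (this is Theorem~\ref{theorem:Galois-equivariant bijections for general algebras}). First replace $\bF_p$ by a finite splitting field $k'$ for $G\times H$, so that $\Gal(k'/k)$ is \emph{cyclic}. Over $k'$, the map $\Phi_{\eta'}$ is a $\Gal(k'/k)$-equivariant $R$-linear isomorphism carrying the permutation basis $\S(k'Hc)$ to some other $\Gal(k'/k)$-stable $R$-basis of $R\otimes_\Z\R(k'Gb)$. Computing the trace of each $\sigma\in\Gal(k'/k)$ in the two permutation bases $\S(k'Gb)$ and $\Phi_{\eta'}(\S(k'Hc))$ gives $|\S(k'Gb)^{\langle\sigma\rangle}|=|\S(k'Hc)^{\langle\sigma\rangle}|$ in $\Z\subseteq R$. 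Since the Galois group is cyclic, every subgroup is of the form $\langle\sigma\rangle$, so Lemma~\ref{lemma:permutation isomorphic} yields an isomorphism of $\Gal(k'/k)$-sets, whence the required $\Gamma_b$-equivariant bijection. The point is that the bijection $I$ is \emph{not} induced by $\widetilde\omega$; it is produced abstractly from the fixed-point counts, and cyclicity of the Galois group is what makes this work.
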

 
As a corollary, we show that if the Galois-descent refinement of Brou{\'e}'s conjecture holds at the level of functorial equivalences, then Conjecture~\ref{conj: GAWC original version} follows.

\begin{corollary}\label{cor BrouetoGaloisAlperin}
	Let $R$ be a commutative ring with $1$ containing $\Z$ as a unitary subring. Let $G$ be a finite group, and let $b$ a block of $\bF_pG$ with an abelian defect group $P$. Let $c\in \bF_pN_G(P)$ be the Brauer correspondent of $b$ and set $k=\F_p[b]=\F_p[c]$. Assume that the $k$-pairs $(G,b)$ and $(N_G(P),c)$ are functorially equivalent over $R$. Then Conjecture \ref{conj: GAWC original version} holds for $b$.
\end{corollary}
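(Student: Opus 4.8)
The plan is to combine Theorem \ref{theorem:funtorial equivalences and Brauer character bijections} with the well-known local-global structure of blocks with abelian defect group, together with the alternating-sum reformulation of the BGAWC provided by Theorem \ref{thm Galois refinement}. First, since $b$ has abelian defect group $P$ and $c\in\bF_p N_G(P)$ is its Brauer correspondent, the Brauer correspondence gives, for every $Q\in\FF_G$, a natural identification of the relevant local data: the subpairs of $(G,b)$ are controlled by the fusion system $\FF=\FF_{(P,e)}(G,b)$, and likewise for $(N_G(P),c)$, and these two fusion systems coincide because $P$ is abelian (so $N_G(P)$ already contains a maximal $(b)$-Brauer pair and the fusion systems are equal by the standard argument). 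Consequently, for each $P$-subgroup $Q$ one gets a correspondence of Brauer pairs giving an isomorphism $N_G(Q)/Q\cdot C_G(Q)$-type data matching those over $N_G(P)$; I will use this to reduce the weight count for $(G,b)$ to that of $(N_G(P),c)$.

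Next I would apply Theorem \ref{theorem:funtorial equivalences and Brauer character bijections} directly to the hypothesised functorial equivalence between the $k$-pairs $(G,b)$ and $(N_G(P),c)$ over $R$: this yields a bijection
$$I:\IBr(G,b)\longrightarrow\IBr(N_G(P),c)$$
that is equivariant for the action of $\Gamma_b=\Gamma_c=\Gal(\bF_p/k)$. For the right-hand side, the BGAWC is already known for $p$-solvable-by-Brauer-correspondent situations, but more to the point, when the defect group is abelian and normal in $N_G(P)$ one can verify the BGAWC for $(N_G(P),c)$ by a direct argument: the block $c$ of $N_G(P)$ has normal defect group $P$, so by Fong--Reynolds and the structure of nilpotent-by-central extensions (or by Külshammer--Puig type reduction) one identifies $\IBr(N_G(P),c)$ with $\W(N_G(P)/P,\bar c)$ in a $\Gamma_c$-equivariant way, and the contributions of all chains of length $\geq 1$ cancel. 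More precisely, I would verify statement (iv) of Theorem \ref{thm Galois refinement} for $(N_G(P),c)$, which for a block with normal abelian defect group collapses to the single term coming from the trivial chain, matching $|\W(N_G(P)/P,\bar c)^T|$.

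Assembling these pieces: the $\Gamma_b$-equivariant bijection $I$ transports the (verified) $\Gamma_c$-equivariant weight bijection for $(N_G(P),c)$ to a $\Gamma_b$-equivariant bijection
$$\IBr(G,b)\longrightarrow\bigsqcup_{Q\in[G\backslash\FF_G]}\W\!\big(N_G(Q)/Q,\bar b_Q\big),$$
where the identification of the local terms on the right with the local terms for $(N_G(P),c)$ uses the fusion-system coincidence from the first step and the fact that the minimal fields $\F_p[b_Q]$ and $\F_p[\bar c_Q]$ agree along the Brauer correspondence (so the $\Gamma$-actions match). This is exactly the statement of Conjecture \ref{conj: GAWC original version} for $b$.

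The main obstacle I anticipate is the careful bookkeeping of Galois actions through the Brauer correspondence at every local subgroup $N_G(Q)/Q$: one must check that the minimal field $\F_p[b_Q]$ equals $\F_p[\bar c_Q]$ and that the natural bijection of local weights is $\Gal(\bF_p/\F_p[b_Q])$-equivariant, not merely a bijection of sets. For $Q=1$ this is handled by Theorem \ref{theorem:funtorial equivalences and Brauer character bijections} as stated, but for larger $Q$ one either invokes an inductive hypothesis (the BGAWC for blocks with strictly smaller defect, which applies since $b_Q$ has defect group $P/Q$ of smaller order once $Q\neq 1$) or, in the abelian-defect setting, exploits that $b_Q$ and $\bar c_Q$ have normal defect groups and so their minimal fields and character-theoretic data are pinned down explicitly. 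A secondary subtlety is ensuring the hypothesis $\F_p[b]=\F_p[c]$ is genuinely available — this follows because the Brauer correspondent is defined by $c=\br_P(b)$ composed with the canonical projection, and Galois automorphisms commute with $\br_P$, so $\Gamma_b\subseteq\Gamma_c$; the reverse inclusion is part of the standard theory of the Brauer correspondence for blocks with abelian defect.
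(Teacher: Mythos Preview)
Your core idea is the same as the paper's: apply Theorem~\ref{theorem:funtorial equivalences and Brauer character bijections} to obtain a $\Gamma_b$-equivariant bijection $\IBr(G,b)\to\IBr(N_G(P),c)$, then identify the right-hand side with the weight side of Conjecture~\ref{conj: GAWC original version}. However, the second half of your argument is substantially over-engineered, and one step is not logically sound as written.

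The paper's proof is three lines. Since $P$ is abelian, it is well known that the only $Q\in[G\backslash\FF_G]$ with $\W(N_G(Q)/Q,\bar b_Q)\neq\varnothing$ is $Q=P$ itself, so the entire weight side of Conjecture~\ref{conj: GAWC original version} for $(G,b)$ equals $\W(N_G(P)/P,\bar b_P)$. Then inflation along $N_G(P)\to N_G(P)/P$ gives a bijection $\W(N_G(P)/P,\bar b_P)\to\IBr(N_G(P),c)$, and one checks directly from the definition that inflation commutes with the $\Gamma_b$-action. Composing with the bijection from Theorem~\ref{theorem:funtorial equivalences and Brauer character bijections} finishes the proof. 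No fusion systems, no chain analysis, no induction, and no appeal to Theorem~\ref{thm Galois refinement} are needed.

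Your detour through Theorem~\ref{thm Galois refinement}(iv) is problematic: that statement is one side of a global equivalence (quantified over \emph{all} finite groups and \emph{all} blocks), not a tool you can invoke for a single block. You could of course verify the displayed equation in (iv) directly for $(N_G(P),c)$, but that is already more work than the paper's route. More importantly, your ``main obstacle'' paragraph worries about matching the local data at every $Q$, comparing $\F_p[b_Q]$ with $\F_p[\bar c_Q]$, and possibly invoking an inductive hypothesis---all of this evaporates once you use the collapse of the weight side to the single term at $Q=P$. The fusion-system coincidence and the Fong--Reynolds/K\"ulshammer--Puig references are likewise unnecessary: the only local fact you need is that inflation is a $\Gamma_b$-equivariant bijection in the normal-defect situation, which is immediate.
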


Theorem \ref{theorem:funtorial equivalences and Brauer character bijections} and Corollary \ref{cor BrouetoGaloisAlperin} will be proved in Section \ref{section: Funtorial equivalences and Brauer character bijections} as applications of a more general result (Theorem \ref{theorem:Galois-equivariant bijections for general algebras}) for arbitrary algebras. Corollary \ref{cor BrouetoGaloisAlperin} suggests that Galois descent properties of functorial equivalences may play a role in verifying Conjecture \ref{conj: GAWC original version}. We take a first step in this direction by showing that Galois conjugate blocks are functorially equivalent. This result is expected to be useful in establishing Galois descent phenomena for functorial equivalences.

\begin{theorem}\label{theorem: Galois conjugate blocks are func equ}
Let $k$ be an algebraically closed field of characteristic $p$ and $\F$ an algebraically closed field of characteristic zero. Let $(G,b)$ be a group-block pair over $k$ and let $\sigma\in\Gamma=\Gal(k/\F_p)$. Then the pairs $(G,b)$ and $(G,\sigma(b))$ are functorially equivalent over $\F$. 
\end{theorem}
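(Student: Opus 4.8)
The plan is to build an explicit isomorphism in the category of diagonal $p$-permutation functors between the functors attached to $(G,b)$ and $(G,\sigma(b))$, using the fact that the Galois automorphism $\sigma$ can be applied ``uniformly'' to all the data defining the functor. First I would recall that a functorial equivalence over $\F$ between two group-block pairs is realised by a pair of mutually inverse morphisms in the diagonal $p$-permutation functor category, and that, by the structure theory (Bouc--Y\i lmaz), such morphisms are built from classes of $p$-permutation bimodules (or, equivalently, from diagonal $p$-subgroups and characters of their relative automorphism groups) up to the appropriate equivalence relation. The key observation is that $\sigma$ acts on the relevant Brauer quotients $kC_G(\Delta P)$ compatibly with the maps $\br_{\Delta P}$, since $\br_P^{kG}$ only rearranges coefficients and commutes with applying $\sigma$ coefficientwise; hence $\sigma$ sends $b$-data to $\sigma(b)$-data bijectively.

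The main steps I would carry out are: \textbf{(1)} Observe that conjugation by $\sigma$ gives a ring automorphism of $kG$ which restricts to a bijection between the block idempotents, sending $b$ to $\sigma(b)$, and more generally intertwines $\br_P$ with itself on the nose. \textbf{(2)} For each diagonal $p$-subgroup $\Delta \leq G\times G$ (or each relevant pair $(P,s)$ with $P$ a $p$-subgroup and $s$ a $p'$-element of $C_G(P)$, as enters the evaluation of a diagonal $p$-permutation functor), check that $\sigma$ induces an isomorphism of the local data of $(G,b)$ at $\Delta$ onto the local data of $(G,\sigma(b))$ at $\Delta$, compatibly with restriction, induction and conjugation maps in the functor; here one uses that over the characteristic-zero field $\F$ the evaluations of the functor are defined in terms of $p'$-elements and Brauer characters, on which $\sigma$ acts by its restriction to roots of unity of $p'$-order. \textbf{(3)} Assemble these local isomorphisms into a natural isomorphism of diagonal $p$-permutation functors $\FF_{(G,b)}^\F \xrightarrow{\sim} \FF_{(G,\sigma(b))}^\F$, and verify naturality square by square; since $\sigma$ is applied uniformly, the squares commute automatically. \textbf{(4)} Conclude that the two pairs are functorially equivalent over $\F$ by exhibiting the inverse, which is supplied symmetrically by $\sigma^{-1}$ (note $\sigma$ has finite order on the $p'$-roots of unity appearing, so $\sigma^{-1}$ is again realised the same way).

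The hard part will be step \textbf{(2)}: making precise exactly which invariants of a group-block pair enter the evaluation of a diagonal $p$-permutation functor over a characteristic-zero coefficient field, and checking that $\sigma$ respects all of them simultaneously --- in particular that $\sigma$ commutes with the Brauer constructions and with the identification of simple modules over $C_G(P)b_P$ with simple modules over $C_G(P)\sigma(b_P)$ at the level of the species/character data that the functor records. A clean way to organise this is to note that $\sigma$, acting coefficientwise on $kG$, is an algebra automorphism carrying the $p$-permutation $kG$-$kG$-bimodule $kGb$ (with the obvious biaction twisted on one side by $\sigma$) to a $p$-permutation bimodule inducing, via the associated functor morphism, exactly the desired isomorphism; its Brauer quotients are computed by applying $\sigma$ to those of $kGb$, which is what makes everything commute. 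Once the bimodule is in hand, the fact that it and its dual compose to the appropriate identities modulo projectives follows from the corresponding statement for $kGb$ together with the functoriality of $\sigma$, giving the functorial equivalence over $\F$.
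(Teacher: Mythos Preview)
Your proposed ``clean way'' via a twisted bimodule has a genuine gap. The object you describe --- $kGb$ with the biaction twisted on one side by $\sigma$ --- is not a $(kG\sigma(b),kGb)$-bimodule in the sense required here: a morphism in $\F pp_k^\Delta$ is the class of a diagonal $p$-permutation $k[G\times G]$-module, so in particular the left and right $k$-actions must coincide. When you twist only one side by $\sigma$, the left $k$-action becomes $\lambda\cdot m=\sigma^{-1}(\lambda)m$ while the right action remains $m\cdot\lambda=\lambda m$; for $\sigma\neq\mathrm{id}$ these differ, so the twisted object is only an $\F_p[G\times G]$-module, not a $k[G\times G]$-module, and hence does not define a morphism in the diagonal $p$-permutation category. (The na\"ive $k$-linear candidate $\sigma(b)\cdot kG\cdot b$ vanishes when $b\neq\sigma(b)$, since distinct block idempotents are orthogonal.) More broadly, by Yoneda a natural transformation $\F T^\Delta_{(G,b)}\to\F T^\Delta_{(G,\sigma(b))}$ \emph{is} an element of $\F T^\Delta(kGb,kG\sigma(b))$; so your step~(3) of ``assembling local isomorphisms into a natural isomorphism'' cannot avoid producing such a virtual $k$-linear bimodule, and you have not indicated how to do so beyond the flawed twisted-bimodule construction.

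The paper sidesteps this difficulty entirely. Rather than exhibiting an explicit equivalence, it exploits the semisimplicity of $\FF^\Delta_{\F pp_k}$: the functors $\F T^\Delta_{(G,b)}$ and $\F T^\Delta_{(G,\sigma(b))}$ are isomorphic if and only if every simple functor $S_{L,u,V}$ occurs in them with the same multiplicity. The paper then invokes the explicit multiplicity formula \cite[Theorem~8.22(b)]{BY22}, which expresses the multiplicity of $S_{L,u,V}$ in $\F T^\Delta_{(G,b)}$ in terms of the fusion system $\FF$ of $b$, certain sets $\mathcal{P}_{(P,e_P)}(L,u)$ of isomorphisms $L\to P$, and the right $\F\Aut(L,u)_{\overline{(P,e_P,\pi)}}$-modules $\F\Proj(kC_G(P)e_P,u)$. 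Since $\sigma$ commutes with Brauer maps and with $G$-conjugation, the blocks $b$ and $\sigma(b)$ share the same fusion system (Lemma~\ref{lem galoisconjugatepairs}), the sets $\mathcal{P}_{(P,e_P)}(L,u)$ and the relevant stabilisers literally coincide for $b$ and $\sigma(b)$, and the map $S\mapsto\lexp{\sigma}S$ furnishes an isomorphism $\F\Proj(kC_G(P)e_P,u)\cong\F\Proj(kC_G(P)\sigma(e_P),u)$ of right $\F\Aut(L,u)_{\overline{(P,e_P,\pi)}}$-modules. All multiplicities therefore agree. This argument uses precisely the local compatibilities you flag in your steps~(1)--(2), but packages them through the multiplicity formula rather than attempting to glue them into an explicit bimodule.
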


Theorem \ref{theorem: Galois conjugate blocks are func equ} will be proved in Section \ref{section: Galois conjugate blocks are functorially equivalent} using the multiplicity formula \cite[Theorem 8.22 (b)]{BY22} for simple diagonal $p$-permutation functors in a block functor.

\section{The BGAWC and alternating sums}\label{section: On Galois automorphisms and alternating sums}
\begin{notation}\label{notation:characters of groups}
{\rm (i) Let $k\subseteq k'$ be an extension of fields. Let $G$ be a finite group and $V$ a finite-dimensional $k'G$-module. The {\it character of $V$} is the class function $\chi_V: G\to k'$, sending $g\in G$ to the trace ${\rm tr}(\rho(g))$ of the linear isomorphism $\rho(g)$ of $V$ defined by $\rho(g)(v)=gv$ for all $v\in V$. For $\sigma\in \Gal(k'/k)$, we denote by $\lexp{\sigma}V$ the $k'G$-module which is equal to $V$ as a module over the subring $kG$ of $k'G$ such that $x$ acts on $\lexp{\sigma}V$ as $\sigma^{-1}(x)$ for all $x\in k'$. For a character $\chi$ of $G$ over $k'$, denote by $\lexp{\sigma}\chi:G\to k'$ the character such that $\lexp{\sigma}\chi(g)=\sigma(\chi(g))$ for all $g\in G$. It is well known (and easy to check) that the character of $\lexp{\sigma}V$ is $\lexp{\sigma}\chi$.

(ii) A map $\chi:G\to k$ is called a(n) {\it (irreducible) character of $G$ over $k$} if $\chi$ is the character of some finite-dimensional (simple) $kG$-module. For $b$ an idempotent in $Z(kG)$, we denote by  $\Irr(kGb)$ the set of irreducible characters of $G$ afforded by simple $kGb$-modules. We adopt the convention that for $b=0$, $\Irr(kGb)=\varnothing$, and a sum indexed by the empty set is zero.  We denote by $\W(kGb)$ the set of characters of simple projective $kGb$-modules; we can identify $\W(kGb)$ with the set of defect zero blocks occurring in a primitive decomposition of $b$ in $Z(kG)$. Let $b$ be an idempotent in $Z(\bF_b G)$, we denote $\Irr(\bF_pGb)$ (resp. $\W(\bF_pGb)$) by $\IBr(G,b)$ (resp. $\mathcal{W}(G,b)$).  Now we see that $\Gamma$ acts on the set $\Irr(\bF_pG)$. Let $\Gamma_b:=\{\sigma\in\Gamma\mid \sigma(b)=b\}$. Then $\Gamma_b$ acts on the sets $\IBr(G,b)$ and $\W(G,b)$. }
\end{notation}

\begin{remark}
{\rm Let $t$ be a positive integer which is not divisible by $p$, and let $\xi\in \bF_p$ be a primitive $t$-th root of unity. Denote by $\Gamma_t$ the group $\Gal(\F_p[\omega]/\F_p)$. Clearly $\Gamma_t$ is independent of the choice of $\xi$.  By elementary field theory, if $t'$ is another positive integer which is not divisible $p$ and which is divisible by $t$, then the restriction map $\Gamma_{t'}\to \Gamma_t$ is surjective. Moreover, it is well known (and easy to prove by using the Zorn lemma) that the restriction map $\Gamma\to \Gamma_t$ is surjective.
 
}
\end{remark} 

The following lemma is a useful criterion for permutation isomorphisms:

\begin{lemma}[{\cite[Lemma 13.23]{Isaacs}}]\label{lemma:permutation isomorphic}
Let $S$ be a (possibly infinite) group and let $X$, $Y$ be two finite $S$-sets. Suppose that for every subgroup $T$ of $S$, we have $|X^T|=|Y^T|$. Then $X$ and $Y$ are isomorphic as $S$-sets.
\end{lemma}
 
By Lemma \ref{lemma:permutation isomorphic}, we easily obtain several equivalent reformulations of Conjecture \ref{conj: GAWC original version}: 
 
\begin{proposition}\label{prop:equivalent reformulations of BGAWC}
Let $G$ be a finite group and $b$ a block of $\bF_pG$.  Then the following are equivalent:
\begin{enumerate}[{\rm (i)}]
	\item Conjecture \ref{conj: GAWC original version} holds for the block $b$.
	\item There exists a bijection 
	$$\IBr(G,b)\to \bigsqcup_{P\in[G\backslash\FF_G]} \W(N_G(P)/P,\bar{b}_P)$$ 
	commuting with the action of $(\Gamma_t)_b$, where $t=\exp(G)_{p'}$.
	\item For any subgroup $T$ of $\Gamma_b$, we have 
	$$|\IBr(G,b)^T|=\sum_{P\in[G\backslash\FF_G]}|\W(N_G(P)/P,\bar{b}_P)^T|.$$
	\item  For any subgroup $T$ of $(\Gamma_t)_b$ where $t=\exp(G)_{p'}$, we have 
	$$|\IBr(G,b)^T|=\sum_{P\in[G\backslash\FF_G]}|\W(N_G(P)/P,\bar{b}_P)^T|.$$
	
\end{enumerate}

\end{proposition}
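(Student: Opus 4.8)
The plan is to deduce (i)$\Leftrightarrow$(iii) directly from Lemma~\ref{lemma:permutation isomorphic}, and then to obtain (i)$\Leftrightarrow$(ii) and (iii)$\Leftrightarrow$(iv) by checking that the $\Gamma_b$-actions appearing in the statement factor through a common finite quotient of $\Gamma_b$. Set $X:=\IBr(G,b)$ and $Y:=\bigsqcup_{P\in[G\backslash\FF_G]}\W(N_G(P)/P,\bar b_P)$. These are finite $\Gamma_b$-sets: the $p$-subgroups $P$ are not moved by $\Gamma$, and $\br_P$ commutes with the $\Gamma$-action, so $\Gamma_b$ fixes each $b_P$ and each $\bar b_P$ and acts on each summand of $Y$ separately. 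With this notation, (i) and (iii) assert that $X\cong Y$ as $\Gamma_b$-sets and that $|X^T|=|Y^T|$ for every subgroup $T\le\Gamma_b$, while (ii) and (iv) are the corresponding statements with $(\Gamma_t)_b$ in place of $\Gamma_b$, where $t:=\exp(G)_{p'}$.

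For (i)$\Leftrightarrow$(iii) I would simply apply Lemma~\ref{lemma:permutation isomorphic} with $S=\Gamma_b$ to the finite $\Gamma_b$-sets $X$ and $Y$. For the remaining equivalences I would use two facts. Fact~A: every $g\in G$, and every $gP\in N_G(P)/P$, has order whose $p'$-part divides $t$, so the eigenvalues of such an element on any $\bF_p$-module are $t$-th roots of unity; consequently every element of $X$ and every element of $Y$ is a character taking values in the subfield $\F_q:=\F_p[\zeta_t]$ of $\bF_p$. Since over $\bF_p$ a simple module is determined up to isomorphism by its character, and the character of $\lexp{\sigma}{V}$ equals $\lexp{\sigma}{\chi_V}$, any $\sigma\in\Gamma$ restricting to the identity on $\F_q$ satisfies $\lexp{\sigma}{V}\cong V$ for the simple modules afforded by elements of $X$ and of $Y$; as $\lexp{\sigma}{(-)}$ also preserves projectivity, the $\Gamma$-actions on $X$ and on $Y$ factor through the surjection $\Gamma\to\Gamma_t=\Gal(\F_q/\F_p)$. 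Fact~B: the block idempotent $b$ already has coefficients in $\F_q$, i.e.\ $\F_p[b]\subseteq\F_q$. Granting Fact~B, one also has $\F_p[\bar b_P]\subseteq\F_p[b]\subseteq\F_q$, the group $(\Gamma_t)_b=\Gal(\F_q/\F_p[b])$ is well defined and fixes $b$, each $b_P$ and each $\bar b_P$, and the restriction map $\Gamma\to\Gamma_t$ carries $\Gamma_b=\Gal(\bF_p/\F_p[b])$ onto $(\Gamma_t)_b$ with kernel $\Gal(\bF_p/\F_q)$.

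Combining Facts~A and~B, the $\Gamma_b$-actions on $X$ and $Y$ factor through the finite quotient $(\Gamma_t)_b$ of $\Gamma_b$. Hence a bijection $X\to Y$ is $\Gamma_b$-equivariant if and only if it is $(\Gamma_t)_b$-equivariant, which gives (i)$\Leftrightarrow$(ii); and for a subgroup $T\le\Gamma_b$ with image $\bar T\le(\Gamma_t)_b$ one has $X^T=X^{\bar T}$ and $Y^T=Y^{\bar T}$, while every subgroup of $(\Gamma_t)_b$ arises as such a $\bar T$ (take its full preimage in $\Gamma_b$), which gives (iii)$\Leftrightarrow$(iv). Together with (i)$\Leftrightarrow$(iii), this shows that all four statements are equivalent.

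The only non-formal ingredient is Fact~B: one has to know that a block idempotent of $\bF_pG$ lies in $\F_p[\zeta_{\exp(G)_{p'}}]\,G$ rather than merely in $\F_p[\zeta_{\exp(G)}]\,G$ --- it is this that singles out the \emph{$p'$-part} $t$ of the exponent as the correct modulus throughout, and it is in fact needed even to make sense of the group $(\Gamma_t)_b$ in statements~(ii) and~(iv). This is well known, reflecting the fact that $p$-blocks are detected by central characters, which carry only $p'$-information; once it and the elementary Fact~A are available, everything else is bookkeeping with the surjection $\Gamma_b\twoheadrightarrow(\Gamma_t)_b$ and a single appeal to the Isaacs criterion (Lemma~\ref{lemma:permutation isomorphic}).
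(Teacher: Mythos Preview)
Your proof is correct and follows essentially the same approach as the paper: both arguments reduce to Lemma~\ref{lemma:permutation isomorphic} together with the observation that the $\Gamma_b$-actions on $\IBr(G,b)$ and on the weight side factor through the finite quotient $(\Gamma_t)_b$, the latter being justified because $k=\F_p[\zeta_t]$ is a splitting field for all subgroups of $G$ (which is precisely your Fact~A and Fact~B combined). The only difference is organizational---the paper deduces (i)$\Leftrightarrow$(ii) from the factoring and then (i)$\Leftrightarrow$(iii), (ii)$\Leftrightarrow$(iv) from Lemma~\ref{lemma:permutation isomorphic}, whereas you do (i)$\Leftrightarrow$(iii) first---and you are more explicit than the paper about the need for $\F_p[b]\subseteq\F_q$, which the paper uses implicitly when it writes $\tilde\sigma\in(\Gamma_t)_b$.
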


\begin{proof}
Let $\omega$ be a primitive $t$-th root of unity in $\bF_p$ and let $k:=\F_p[\omega]$. Since $k$ is a splitting field for all subgroups of $G$, the values of any irreducible character of $G$ over $\bF_p$ are contained in $k$. Let $\sigma\in \Gamma_b$ and let $\tilde{\sigma}$ be the image of $\sigma$ under the restriction map $\Gamma_b\to (\Gamma_t)_b$. Then we have the following commutative diagrams:
$$\xymatrix{
\IBr(G,b)\ar@{=}[r]\ar[d]^{\sigma} & \IBr(kGb)\ar[d]^{\tilde{\sigma}}  \\
\IBr(G,b)\ar@{=}[r] &  \IBr(kGb)
}~~~~   \xymatrix{
\W(N_G(P)/P,\bar{b}_P)\ar@{=}[r]\ar[d]^{\sigma} & \W(k(N_G(P)/P)\bar{b}_P)\ar[d]^{\tilde{\sigma}}  \\
\W(N_G(P)/P,\bar{b}_P)\ar@{=}[r] &  \W(k(N_G(P)/P)\bar{b}_P)
}$$
where $P\in [G\backslash\FF_G]$. This proves the equivalence of (i) and (ii).
 The equivalence of (i) and (iii) and the equivalence of (ii) and (iv) follow from Lemma \ref{lemma:permutation isomorphic}.	
\end{proof}

In the rest of this section we will use Notation \ref{notation:chain}.

\begin{lemma}[Galois refinement of {\cite[Lemma 10.7.4]{Lin18b}}]\label{lemma: Galois refinement of 10.7.4}
Let $G$ be a finite group, $n$ a positive integer and $\sigma=(1=P_0<P_1<\cdots<P_n)$ a chain in $\N_G$. Set $P=P_1$. Let $b$ be an idempotent in $Z(\bF_p G)$ such that $\br_P^{\bF_pG}(b)\neq 0$ and denote by $c$ the image of $\br_P^{\bF_pG}(b)$ in $\bF_p[N_G(P)/P]$. Set $\bar{\sigma}=(1<P_2/P<\cdots<P_n/P)\in \S_{N_G(P)/P}$. 
\begin{enumerate}[{\rm (i)}]
	\item Write $N_P=N_G(P)/P$. Then $G_\sigma/P=(N_P)_{\bar{\sigma}}$.
	\item  The image of $b_\sigma=\br_{P_n}^{\bF_p G}(b)$ in $\bF_p[N_G(P_n)/P]$ is equal to $c_{\bar{\sigma}}=\br_{P_n/P}^{\bF_p[N_G(P)/P]}(c)$. In particular, for any subgroup $T$ of $\Gamma_b$, we have
	$$|\IBr(G_\sigma,b_\sigma)^T|=|\IBr(G_\sigma/P,c_{\bar{\sigma}})^T|.$$
\end{enumerate}
\end{lemma}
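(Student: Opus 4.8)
The plan is to dispatch the elementary group‑theoretic part (i) directly, and to reduce part (ii) to an identity between two idempotents in $\bF_p[G_\sigma/P]$, after which the passage to $T$‑fixed points is purely formal.

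For (i): since $\sigma\in\N_G$, every $P_i$ is normal in $P_n$, so $P_n\le\bigcap_{i=0}^{n}N_G(P_i)=G_\sigma$; in particular $P=P_1\le P_n\le G_\sigma$, and $G_\sigma\le N_G(P_1)=N_G(P)$ forces $P\trianglelefteq G_\sigma$, so $G_\sigma/P$ is defined. For $2\le i\le n$ one has $P\le P_i\le P_n\le N_G(P)$, so the standard correspondence for the normal subgroup $P\trianglelefteq N_G(P)$ gives $N_{N_P}(P_i/P)=N_{N_G(P)}(P_i)/P$. Intersecting over $i$ (the term $P_1/P=1$ imposing no condition), and using $N_G(P_0)=G$ and $N_G(P_1)=N_G(P)$,
\[
(N_P)_{\bar\sigma}=\Big(\bigcap_{i=2}^{n}N_{N_G(P)}(P_i)\Big)/P=\Big(\bigcap_{i=0}^{n}N_G(P_i)\Big)/P=G_\sigma/P.
\]

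For (ii), write $b=\sum_{x\in G}\alpha_x x$ with $\alpha_x\in\bF_p$. Since $b_\sigma=\br_{P_n}^{\bF_pG}(b)$ is supported on $C_G(P_n)$, and $C_G(P_n)\le C_G(P)\cap G_\sigma\le N_G(P)$, the element $b_\sigma$ lies in $\bF_pC_G(P_n)\subseteq\bF_pG_\sigma$ and has an image $\bar b_\sigma$ under the canonical surjection $\pi\colon\bF_pG_\sigma\twoheadrightarrow\bF_p[G_\sigma/P]$. The first and principal step is to prove $\bar b_\sigma=c_{\bar\sigma}$. I would do this by comparing the coefficients of a coset $gP$ on both sides: the coefficient of $gP$ in $\bar b_\sigma$ is $\sum_{y\in P,\,gy\in C_G(P_n)}\alpha_{gy}$, while the coefficient of $gP$ in $c_{\bar\sigma}=\br_{P_n/P}^{\bF_p[N_G(P)/P]}(c)$ is $\sum_{y\in P,\,gy\in C_G(P)}\alpha_{gy}$ if $gP\in C_{N_G(P)/P}(P_n/P)$ and $0$ otherwise, with both vanishing unless $g\in N_G(P)$. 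The structural inputs are that, because $P=P_1\trianglelefteq P_n$, the group $P_n$ acts by conjugation on the coset $gP$ whenever $[g,P_n]\subseteq P$ — which holds in particular whenever $gP$ meets $C_G(P_n)$ — and it normalises $C_G(P)$, so that $P_n$ acts on $gP\cap C_G(P)$ with fixed‑point set exactly $gP\cap C_G(P_n)$; and that, because $b\in Z(\bF_pG)$, the map $x\mapsto\alpha_x$ is constant on $P_n$‑conjugacy classes. A count of $P_n$‑orbits modulo $p$ then shows that, when $gP$ centralises $P_n/P$, the sums over $\{y:gy\in C_G(P)\}$ and over $\{y:gy\in C_G(P_n)\}$ differ by a sum over $P_n$‑orbits of size divisible by $p$, hence agree in $\bF_p$; and when $gP\cap C_G(P_n)=\varnothing$ the $P_n$‑action on $gP\cap C_G(P)$ is free, so $\sum_{y:gy\in C_G(P)}\alpha_{gy}=0$. (This is essentially \cite[Lemma 10.7.4]{Lin18b}, which may instead be cited directly; note also $C_{N_G(P)/P}(P_n/P)\subseteq(N_P)_{\bar\sigma}=G_\sigma/P$, so $\bar b_\sigma$ and $c_{\bar\sigma}$ indeed live in the same algebra $\bF_p[G_\sigma/P]$.)

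Granting $\bar b_\sigma=c_{\bar\sigma}$, the remaining step is formal. Since $P$ is a normal $p$‑subgroup of $G_\sigma$, the kernel of $\pi$ is the nilpotent ideal generated by the augmentation ideal of $\bF_pP$, hence is contained in $J(\bF_pG_\sigma)$; thus inflation along $\pi$ is a bijection from simple $\bF_p[G_\sigma/P]$‑modules to simple $\bF_pG_\sigma$‑modules, and since $b_\sigma$ acts on an inflated module through $\pi$ as $\bar b_\sigma=c_{\bar\sigma}$, this restricts to a bijection $I\colon\IBr(G_\sigma/P,c_{\bar\sigma})\to\IBr(G_\sigma,b_\sigma)$ with $I(\bar\chi)=\bar\chi\circ\pi$. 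As $\pi$ is defined over $\F_p$, Galois twisting commutes with $I$; and for $\rho\in\Gamma_b$ one has $\rho(b_\sigma)=b_\sigma$ and $\rho(c_{\bar\sigma})=c_{\bar\sigma}$, since $\rho$ commutes with $\br_{P_n}$ and with $\pi$. Hence for every subgroup $T\le\Gamma_b$, $I$ restricts to a bijection on $T$‑fixed points, which gives $|\IBr(G_\sigma,b_\sigma)^T|=|\IBr(G_\sigma/P,c_{\bar\sigma})^T|$. The only real obstacle I anticipate is the equality $\bar b_\sigma=c_{\bar\sigma}$, i.e.\ the compatibility of the Brauer homomorphism $\br_{P_n}$ with reduction modulo $P_1$, where the normality $P_1\trianglelefteq P_n$ and the centrality of $b$ are both genuinely used; once that is in hand, the Galois‑equivariant refinement costs nothing.
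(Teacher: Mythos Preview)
Your proposal is correct and follows essentially the same route as the paper: part (i) is the elementary normaliser computation the paper dismisses as ``a trivial exercise'', and for part (ii) the paper simply cites \cite[Lemma~10.7.4]{Lin18b} for the equality $\bar b_\sigma=c_{\bar\sigma}$ and then observes, as you do, that the inflation bijection $\IBr(G_\sigma/P,c_{\bar\sigma})\to\IBr(G_\sigma,b_\sigma)$ is $\Gamma_b$-equivariant. The only differences are cosmetic: you unpack the orbit-counting proof of the idempotent identity (which the paper outsources to Linckelmann), and you deduce the $T$-fixed-point equality directly from the equivariant bijection, whereas the paper phrases this last step via Lemma~\ref{lemma:permutation isomorphic}---a reference that is in fact unnecessary once one has an explicit $\Gamma_b$-equivariant bijection. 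One small wording point: where you say the $P_n$-action on $gP\cap C_G(P)$ is ``free'' when there are no fixed points, you mean fixed-point-free; since $P_n$ is a $p$-group this still forces every orbit to have size divisible by $p$, so your conclusion is unaffected.
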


\begin{proof}
Statement (i) is a trivial exercise. Except for the last equality, everything in statement (ii) is proved in the proof of \cite[Lemma 10.7.4]{Lin18b}. By the proof of \cite[Lemma 10.7.4]{Lin18b}, the inflation map $\IBr(G_\sigma/P) \to \IBr(G_\sigma)$ induces a bijection $\IBr(G_\sigma/P,c_{\bar{\sigma}}) \to \IBr(G_\sigma,b_\sigma)$. By definition, one easily sees that this bijection commutes with the action of $\Gamma_b$. Now by Lemma \ref{lemma:permutation isomorphic} we obtain the desired equality $|\IBr(G_\sigma,b_\sigma)^T|=|\IBr(G_\sigma/P,c_{\bar{\sigma}})^T|.$
\end{proof}

\begin{lemma}[Galois refinement of {\cite[3.6, 3.7]{KR}} or {\cite[Lemma 10.7.5]{Lin18b}}]\label{lemma: Galois refinement of 10.7.5}
Let $G$ be a finite group and $b$ a block of $\bF_pG$. For $P$ a nontrivial $p$-subgroup of $G$, set $N_P=N_G(P)/P$ and denote by $c_P$ the image of $\br_P^{\bF_pG}(b)$ in $\bF_pN_P$. For any subgroup $T$ of $\Gamma_b$, we have
$$\sum_{1\neq\sigma\in [G\backslash\N_G]}(-1)^{|\sigma|-1}|\IBr(G_\sigma,b_\sigma)^T|$$
$$=\sum_{1\neq P\in [G\backslash\FF_G]} \left(|\IBr(N_P,c_P)^T|-\sum_{1\neq\tau\in [N_P\backslash\N_{N_P}]}(-1)^{|\tau|-1}|\IBr((N_P)_\tau,(c_P)_\tau)^T|\right).$$ If $b$ has defect zero, then this sum is zero.
\end{lemma}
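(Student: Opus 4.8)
The plan is to follow the argument for the non-Galois statement (\cite[Lemma 10.7.5]{Lin18b}, repackaging \cite[3.6, 3.7]{KR}): reorganise the left-hand sum according to the first nontrivial term of each chain, and then invoke Lemma~\ref{lemma: Galois refinement of 10.7.4} to push the count of $T$-fixed Brauer characters down to the quotients $N_P$. The Galois refinement is then essentially automatic, because any $\Gamma_b$-equivariant bijection restricts to a bijection on $T$-fixed points for every $T\le\Gamma_b$, so replacing $|\IBr(-)|$ by $|\IBr(-)^T|$ throughout the classical argument changes nothing of substance. Before starting I would record two easy compatibilities of the $\Gamma$-action on $\bF_pG$: since $\br_{P_n}^{\bF_pG}$ merely selects the coefficients supported on $C_G(P_n)$, it commutes with this action, whence $\Gamma_b\subseteq\Gamma_{b_\sigma}$ for every $\sigma$ and $\Gamma_b\subseteq\Gamma_{c_P}$ for every nontrivial $P$ (so all fixed-point sets in the statement make sense); and since conjugation by $g\in G$ also commutes with the $\Gamma$-action, the summand $|\IBr(G_\sigma,b_\sigma)^T|$ depends only on the $G$-orbit of $\sigma$.

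The combinatorial core is, for each fixed nontrivial $P\in[G\backslash\FF_G]$, a bijection between the $G$-orbits of nontrivial chains in $\N_G$ whose first nontrivial term is $G$-conjugate to $P$ and the set $[N_P\backslash\N_{N_P}]$. Concretely, a chain $\sigma=(1<P<P_2<\cdots<P_n)\in\N_G$ whose first nontrivial term is exactly $P$ is sent to $\bar\sigma=(1<P_2/P<\cdots<P_n/P)$; since $P_i\le P_n$ with $P\trianglelefteq P_n$ forces $P_i\le N_G(P)$, the correspondence theorem makes this a bijection onto $\N_{N_P}$, with $|\bar\sigma|=|\sigma|-1$, and with the trivial chain $(1)$ of $N_P$ corresponding to $\sigma=(1<P)$. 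If $g\sigma=\sigma'$ with both chains having first nontrivial term $P$, then $g\in N_G(P)$; and conjugation by any $x\in P$ fixes every $P_i\,(\ge P)$, so $P$ acts trivially on such chains. Hence $G$-conjugacy reduces to $N_G(P)$-conjugacy, which reduces to $N_P$-conjugacy of quotient chains, which gives the asserted bijection of orbit sets. As every nontrivial chain in $\N_G$ has first nontrivial term $G$-conjugate to a unique element of $[G\backslash\FF_G]$, this produces
$$\sum_{1\neq\sigma\in[G\backslash\N_G]}(-1)^{|\sigma|-1}|\IBr(G_\sigma,b_\sigma)^T|=\sum_{1\neq P\in[G\backslash\FF_G]}\,\sum_{\bar\sigma\in[N_P\backslash\N_{N_P}]}(-1)^{|\bar\sigma|}|\IBr(G_\sigma,b_\sigma)^T|,$$
where in the inner sum $\sigma$ denotes the chain $(1<P<\cdots)$ lifting $\bar\sigma$.

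It then remains to evaluate the inner summands. If $\br_P^{\bF_pG}(b)=0$ then $c_P=0$, and $b_\sigma=\br_{P_n}^{\bF_pG}(b)=\br_{P_n}^{\bF_pC_G(P)}(\br_P^{\bF_pG}(b))=0$ for every $P_n\ge P$; by the convention $\IBr(-,0)=\varnothing$ the $P$-contribution then vanishes on both sides, so we may restrict to those $P$ with $\br_P^{\bF_pG}(b)\neq0$. For such $P$, Lemma~\ref{lemma: Galois refinement of 10.7.4} gives $G_\sigma/P=(N_P)_{\bar\sigma}$, identifies the image of $b_\sigma$ with $(c_P)_{\bar\sigma}$, and yields $|\IBr(G_\sigma,b_\sigma)^T|=|\IBr((N_P)_{\bar\sigma},(c_P)_{\bar\sigma})^T|$. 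Substituting this and then splitting off the term $\bar\sigma=(1)$, which contributes $|\IBr(N_P,c_P)^T|$, from the terms with $\bar\sigma$ nontrivial, which contribute $\sum_{1\neq\tau}(-1)^{|\tau|}|\IBr((N_P)_\tau,(c_P)_\tau)^T|=-\sum_{1\neq\tau}(-1)^{|\tau|-1}|\IBr((N_P)_\tau,(c_P)_\tau)^T|$, reproduces precisely the right-hand side of the lemma. Finally, if $b$ has defect zero then $\br_P^{\bF_pG}(b)=0$ for every nontrivial $P$, so both sides are zero.

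I expect the only delicate point to be the orbit bookkeeping of the second paragraph: the reduction of $G$-conjugacy to $N_P$-conjugacy, the index shift $|\bar\sigma|=|\sigma|-1$ and the sign it carries, and the edge case $\sigma=(1<P)\leftrightarrow\bar\sigma=(1)$. The Galois-theoretic part rides along for free, since the relevant bijection is already shown to be $\Gamma_b$-equivariant in Lemma~\ref{lemma: Galois refinement of 10.7.4} and the orbit correspondences are purely about $G$-conjugacy of chains of $p$-subgroups, with no reference to $\Gamma_b$.
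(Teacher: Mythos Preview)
Your proof is correct and follows essentially the same approach as the paper's: partition nontrivial chains by their first nontrivial term, identify the resulting orbits with $[N_P\backslash\N_{N_P}]$, and invoke Lemma~\ref{lemma: Galois refinement of 10.7.4}. The only cosmetic differences are that the paper treats the length-$1$ chains separately via the inflation bijection $\IBr(N_P,c_P)\to\IBr(N_G(P),b_P)$ and cites \cite[Lemma~10.7.3]{Lin18b} for the orbit correspondence, whereas you handle length-$1$ chains uniformly through Lemma~\ref{lemma: Galois refinement of 10.7.4} (valid since that lemma allows $n=1$) and prove the orbit correspondence inline; your explicit treatment of the case $\br_P^{\bF_pG}(b)=0$ is a small clarification the paper leaves implicit.
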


\begin{proof}
We proceed by an argument analogous to that in the proof \cite[Lemma 10.7.5]{Lin18b}. The chain of length $1$ in the sum on the left side yields the summand $\sum_{1\neq P\in[G\backslash\FF_G]}|\IBr(N_G(P),b_P)^T|$, where $b_P=\br_P(b)$, regarded as a (possibly empty) sum of blocks of $\bF_pN_G(P)$. Since the kernel of the canonical algebra homomorphism $\bF_pN_G(P)\to \bF_pN_P$ is in the Jacobson radical of $\bF_pN_G(P)$, the inflation map $\IBr(N_P,c_P)\to \IBr(N_G(P),b_P)$ is a bijection. By definition one easily sees that this bijection commutes with the action of $\Gamma_b$. Hence by Lemma \ref{lemma:permutation isomorphic}, we have
$$\sum_{1\neq P\in[G\backslash\FF_G]}|\IBr(N_G(P),b_P)^T|=\sum_{1\neq P\in [G\backslash\FF_G]}|\IBr(N_P,c_P)^T|.$$

The chains $\sigma=(1=P_0<P_1<\cdots<P_{|\sigma|})$ with $|\sigma|>1$ in the sum of the left side are partitioned according to their second term $P_1$. Set $\bar{\sigma}=1<P_2/P_1<\cdots<P_{|\sigma|}/P_1$. By \cite[Lemma 10.7.3]{Lin18b}, two chains $\sigma$ and $\sigma'$ in $\N_G$ with the same second term (say $P_1=P'_1=P$) are $G$-conjugate if and only if $\bar{\sigma}$ and $\bar{\sigma}'$ are $N_{P}$-conjugate. In that process (i.e. partition according to the second term and then passage from $\sigma$ to $\bar{\sigma}$), chains are shortened by one term. So we have to adjust the signs, accounting for the minus signs in front of the sums  $\sum_{\tau\in [N_P\backslash\N_{N_P}]}(-1)^{|\tau|-1}|\IBr((N_P)_\tau,(c_P)_\tau)^T|$. Finally, the fact that $|\IBr((N_P)_\tau,(c_P)_\tau)^T|$ shows up on the right side follows from Lemma \ref{lemma: Galois refinement of 10.7.4}. If $b$ has defect zero, then $\br_P(b)=0$ for any nontrivial $p$-subgroup $P$ of $G$, whence the last statement.
\end{proof}

\begin{proof}[Proof of Theorem \ref{thm Galois refinement}]
We proceed by an argument analogous to that in the proof \cite[Theorem 10.7.6]{Lin18b}. The equivalence of (ii) and (iii) follows from \cite[Proposition 3.3]{KR} or \cite[Proposition 8.13.13]{Lin18b}. If $b$ is a block of defect zero, then $\IBr(G,b)=\W(G,b)$, and hence $|\IBr(G,b)^T|=|\W(G,b)^T|$. By Lemma \ref{lemma: Galois refinement of 10.7.5}, the sum $\sum_{1\neq\tau\in [G\backslash \N_G]}(-1)^{|\tau|}|\IBr(G_\tau,b_\tau)^T|$ is 0, and hence 
$$\sum_{\tau\in [G\backslash \N_G]}(-1)^{|\tau|}|\IBr(G_\tau,b_\tau)^T|=|\IBr(G,b)^T|.$$ So in this case, the equality in (iv) holds.

 If $b$ is a block of positive defect, then $\W(G,b)=\varnothing$. Thus if $b$ is an arbitrary idempotent in $Z(\bF_pG)$, then $|\W(G,b)^T|$ counts the number of defect zero blocks in a primitive decomposition of $b$ in $Z(\bF_pG)$ that are fixed by $T$. The equivalence of (iii) and (iv) is an immediate consequence.
 
 Suppose that (iv) holds. Let $G$ be a finite group and $b$ a block of $\bF_pG$ of positive defect. Since (iv) holds, it follows that the left side in Lemma \ref{lemma: Galois refinement of 10.7.5} in $|\IBr(G,b)^T|$. The right side is, again by (iv), equal to $\sum_{1\neq P\in [G\backslash \FF_G]}|\W(N_P,c_P)^T|$, and this is the statement of a reformulation of Conjecture \ref{conj: GAWC original version} for the block $b$, see Proposition \ref{prop:equivalent reformulations of BGAWC} (iii). Thus (iv) implies (i).
 
 Suppose that (i) holds. We show (iv) by induction on $|G|$. By the discussion in the first two paragraphs, we may assume that $b$ is a block of $\bF_pG$ with positive defect. Note that for $P$ a nontrivial $p$-subgroup of $G$, the group $N_P=N_G(P)/P$ in Lemma \ref{lemma: Galois refinement of 10.7.5} has order strictly smaller that $|G|$. Thus by (iv) applied to the sum of blocks $c_P$ of $\bF_pN_P$, the right side in Lemma \ref{lemma: Galois refinement of 10.7.5} is equal to
 $\sum_{1\neq P\in [G\backslash \FF_G]}|\W(N_P,c_P)^T|$. But since Conjecture \ref{conj: GAWC original version} is assumed to hold, by Proposition \ref{prop:equivalent reformulations of BGAWC} (iii), $\sum_{1\neq P\in [G\backslash \FF_G]}|\W(N_P,c_P)^T|=|\IBr(G,b)^T|$. Now the equality in Lemma \ref{lemma: Galois refinement of 10.7.5} becomes
 $$\sum_{1\neq\sigma\in [G\backslash\N_G]}(-1)^{|\sigma|-1}|\IBr(G_\sigma,b_\sigma)^T|=|\IBr(G,b)^T|,$$
 and this shows that (iv) holds also for $b$.
\end{proof}

\section{Galois refinement of functorial Alperin weight conjecture}\label{section: Galois refinement of functorial Alperin weight conjecture}

Let $k$ be a field of characteristic $p>0$, let $\F$ be an algebraically closed field of characteristic zero and let $R$ be a commutative ring with $1$. Let $G$ and $H$ be finite groups. We first recall the notions of diagonal $p$-permutation functors and functorial equivalences. We refer the reader to  \cite{BY22} and \cite{BY20} for more details. Note that in \cite{BY22} and \cite{BY20}, the field $k$ is assumed to be algebraically closed; however, these notions can be defined over arbitrary field $k$. 

\begin{void}\label{void:notaions related to functorial equiavlences}
{\rm (i) By a \textit{group-idempotent pair} over $k$, or simply a \textit{$k$-pair}, we mean a pair $(G,b)$ consisting of a finite group $G$ and a central idempotent $b\in Z(kG)$. When $b$ is a block idempotent, such pairs are called \textit{group-block pairs} in \cite{BBY}. The notion of functorial equivalence was originally defined for group-block pairs, but it extends naturally to group-idempotent pairs.

(ii) A \textit{$\DD$-pair} is a pair $(L,u)$ where $L$ is a $p$-group and $u\in \Aut(L)$ is an automorphism of order prime to $p$. We write $L\langle u\rangle$ for the semidirect product $L\rtimes \langle u\rangle$. If $(M,v)$ is another $\DD$-pair, then an isomorphism between the pairs $(L,u)$ and $(M,v)$ is a group isomorphism $L\langle u\rangle\to M\langle v\rangle$ that sends $u$ to an element conjugate to $v$. We denote by $\Aut(L,u)$ the automorphism group of the pair $(L,u)$ and by $\Out(L,u)$ the group of outer automorphisms $\Aut(L,u)/\Inn(L\langle u\rangle)$ of $(L,u)$; see \cite[Notation~6.8 and Lemma~6.10]{BY22}. 
	
(iii) Let $b\in Z(kG)$ and $c\in Z(kH)$ be central idempotents. A $p$-permutation $(kGb,kHb)$-bimodule is called \textit{diagonal} if its indecomposable direct summands have twisted diagonal vertices as subgroups of $G\times H$. We denote by $T^\Delta(kGb,kHc)$ the Grothendieck group of diagonal $p$-permutation $(kGb,kHc)$-bimodules with respect to split short exact sequences, and we set $$RT^\Delta(kGb,kHc):=R\otimes_\Z T^\Delta(kGb,kHc).$$

We denote by $Rpp_k^\Delta$ the category whose objects are finite groups and whose morphisms from $G$ to $H$ are given by $RT^\Delta(kH,kG)$. The composition is induced from the tensor product of bimodules. An $R$-linear functor from $Rpp_k^\Delta$ to the category of $R$-modules is called a \textit{diagonal $p$-permutation functor} over $R$. The category of diagonal $p$-permutation functors over $R$ is denoted by $\FF^\Delta_{Rpp_k}$; it is an abelian category.

(iv) We denote $$RT^\Delta_{(G,b)}:=RT^\Delta(-,kGb),$$ and we say that the $k$-pairs $(G,b)$ and $(H,c)$ are \textit{functorially equivalent over $R$}, if the diagonal $p$-permutation functors $RT^\Delta_{(G,b)}$ and $RT^\Delta_{(H,c)}$ are isomorphic. By Yoneda's Lemma, this is equivalent to the existence of elements $\eta\in RT^\Delta(kGb,kHc)$ and $\omega\in RT^\Delta(kHc,kGb)$ such that 
\begin{align*}
	\eta \cdot_H\omega =[kGb]\,\, \text{in } RT^\Delta(kGb,kGb)\quad\text{and}\quad \omega \cdot_G\eta=[kHc]\,\, \text{in } RT^\Delta(kHc,kHc)\,,
\end{align*}
where $\cdot_G$ and $\cdot_H$ denote the composition in the category $Rpp_k^\Delta$; see \cite[Lemma~10.2]{BY22}.

(v) If $k$ is algebraically closed, then, by \cite[Corollary~6.15]{BY22}, the category $\FF^\Delta_{\F pp_k}$ of diagonal $p$-permutation functors over $\F$ is semisimple. Moreover, the simple functors $S_{L,u,V}$ are parametrized by the isomorphism classes of triples $(L,u,V)$, where $(L,u)$ is a $\DD$-pair and $V$ is a simple $\F\Out(L,u)$-module. 

(vi) We denote by $\Proj(kHc,kGb)$ the Grothendieck group of projective $(kHc,kGb)$-bimodules, and by $R\Proj(-,kG)$ the diagonal $p$-permutation functor sending a finite group $H$ to $$R\Proj(kH,kG)=R\otimes_\Z\Proj(kH,kG).$$ Note that when $R=\F$ and $k$ is algebraically closed, by \cite[Lemma~3.5]{BY24}, one has $$\F\Proj(-,kG)=|\IBr(G)|\cdot S_{\un,1,\F}$$ in the Grothendieck group $K_0(\FF^\Delta_{\F pp_k})$ of diagonal $p$-permutation functors. 
}
\end{void}

\begin{notation}
{\rm Let $b\in Z(\overline{\F}_p(G))$ be a central idempotent and let $T\le \Gamma_b$. Let $\PIM(G,b)^T$ denote the set of $T$-fixed projective indecomposable right $\bF_pGb$-modules and let $\F\Proj(kGb)^T$ denote the $\F$-span of $\PIM(G,b)^T$. We denote by $\F\Proj(-,kGb)^T$ the subfunctor of $\F T^\Delta_{(G,b)}$ generated by the set $\PIM(G,b)^T$ viewed as a subset of 
	$$\F T^\Delta_{(G,b)}(\un)=\F T^\Delta(\un,kGb)=\F\Proj(\un,kGb)\cong \F \Proj(kGb).$$ 
	In other words, for any finite group $H$, we have
\begin{align*}
\F\Proj(H,kGb)^T=\sum_{P\in \PIM(G,b)^T} \{X\otimes_k P\in \F T^\Delta(kH,kGb)\mid X\in \F T^\Delta(kH,\un)\}\,.
\end{align*}
}
\end{notation}

\begin{lemma}\label{lem Galoisstablefunctor}
Let $b$ be a central idempotent of $\bF_pG$ and let $T\le \Gamma_b$ be a subgroup. One has
\begin{align*}
\F\Proj(-,kGb)^T=|\IBr(G,b)^T|\cdot S_{\un,1,\F}
\end{align*}
in $K_0(\FF^\Delta_{\F pp_k})$. 
\end{lemma}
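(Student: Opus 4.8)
The plan is to exhibit $\F\Proj(-,kGb)^T$ as a direct sum of copies of the simple functor $S_{\un,1,\F}$ — one copy for each $T$-fixed projective indecomposable $\bF_pGb$-module — and then to count those copies by comparing $\PIM(G,b)^T$ with $\IBr(G,b)^T$. The starting point is that the representable functor $\F T^\Delta(-,\un)$ is isomorphic to $S_{\un,1,\F}$. Indeed, a twisted diagonal subgroup of $H\times\un$ is the graph of an isomorphism between a subgroup of $H$ and a subgroup of the trivial group, hence is trivial; consequently every diagonal $p$-permutation $(kH,k\un)$-bimodule is projective as a left $kH$-module, so $\F T^\Delta(-,\un)$ coincides with the diagonal $p$-permutation functor $\F\Proj(-,k\un)$, which by \cite[Lemma~3.5]{BY24} equals $|\IBr(\un)|\cdot S_{\un,1,\F}=S_{\un,1,\F}$. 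In particular $\F T^\Delta(\un,\un)=\F$, so $S_{\un,1,\F}(\un)$ is one-dimensional.

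For each $P\in\PIM(G,b)$ write $\langle[P]\rangle$ for the subfunctor of $\F T^\Delta_{(G,b)}$ generated by the class $[P]\in\F T^\Delta(\un,kGb)\cong\F\Proj(kGb)$; explicitly $\langle[P]\rangle(H)=\{X\otimes_k P\mid X\in\F T^\Delta(kH,\un)\}$, so that, straight from the definition of $\F\Proj(-,kGb)^T$, one has $\F\Proj(-,kGb)^T=\sum_{P\in\PIM(G,b)^T}\langle[P]\rangle$. By Yoneda's Lemma $\langle[P]\rangle$ is the image of the morphism $\F T^\Delta(-,\un)\to\F T^\Delta_{(G,b)}$ corresponding to $[P]$; since $\F T^\Delta(-,\un)\cong S_{\un,1,\F}$ is simple and $\langle[P]\rangle\neq 0$ (its value at $\un$ contains $[P]\neq 0$), it is a nonzero quotient of a simple functor, hence $\langle[P]\rangle\cong S_{\un,1,\F}$. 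Moreover, for $P\neq P'$ in $\PIM(G,b)$ the subfunctors $\langle[P]\rangle$ and $\langle[P']\rangle$ are distinct: otherwise $[P']$ would lie in $\langle[P]\rangle(\un)=\F\cdot[P]$, contradicting the linear independence of the distinct basis vectors $[P],[P']$ of $\F\Proj(kGb)$. Distinct simple subfunctors being in direct sum, this yields
\[
\F\Proj(-,kGb)^T=\bigoplus_{P\in\PIM(G,b)^T}\langle[P]\rangle\cong|\PIM(G,b)^T|\cdot S_{\un,1,\F}\quad\text{in }K_0(\FF^\Delta_{\F pp_k}).
\]

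It then remains to prove $|\PIM(G,b)^T|=|\IBr(G,b)^T|$. Here I would use that $P\mapsto P/P J(\bF_pGb)$ is a bijection from $\PIM(G,b)$ onto the set of isomorphism classes of simple right $\bF_pGb$-modules, and that it is $\Gamma_b$-equivariant because for $\sigma\in\Gamma_b$ the functor $V\mapsto\lexp{\sigma}V$ is exact and preserves projectivity and the Jacobson radical. Composing it with the $\Gamma_b$-equivariant bijection between simple right and simple left $\bF_pGb$-modules induced by the coefficient-fixing algebra isomorphism $(\bF_pGb)^{\mathrm{op}}\cong\bF_pGb$, $g\mapsto g^{-1}$, and then with the map $V\mapsto\chi_V$ from simple left $\bF_pGb$-modules to $\IBr(G,b)$ — a bijection because $\bF_pGb/J(\bF_pGb)$ is a product of matrix algebras over $\bF_p$, so non-isomorphic simple modules have distinct trace functions, and $\Gamma_b$-equivariant because the character of $\lexp{\sigma}V$ is $\lexp{\sigma}\chi_V$ — one obtains a $\Gamma_b$-equivariant bijection $\PIM(G,b)\to\IBr(G,b)$. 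Restricting to $T$-fixed points gives the claimed equality, which together with the displayed isomorphism completes the proof.

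The functorial steps are routine once \cite[Lemma~3.5]{BY24} and the abelian structure of $\FF^\Delta_{\F pp_k}$ are in hand; the part demanding the most care is the final count — verifying that the passages projective $\leftrightarrow$ simple and right $\leftrightarrow$ left module are genuinely $\Gamma_b$-equivariant, so that no contragredient or Frobenius twist sneaks into the identification of $|\PIM(G,b)^T|$ with $|\IBr(G,b)^T|$.
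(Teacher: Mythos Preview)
Your proof is correct in substance and close in spirit to the paper's: both rest on \cite[Lemma~3.5]{BY24} and on evaluating at the trivial group to read off the multiplicity. The paper takes a shorter path, simply observing the inclusions $\F\Proj(-,kGb)^T\subseteq\F\Proj(-,kGb)\subseteq\F\Proj(-,kG)=|\IBr(G)|\cdot S_{\un,1,\F}$ and invoking semisimplicity to conclude that the subfunctor is itself a multiple of $S_{\un,1,\F}$; the multiple is then $\dim_\F\F\Proj(\un,kGb)^T=|\PIM(G,b)^T|=|\IBr(G,b)^T|$. Your Yoneda decomposition of the generated subfunctor into explicit copies of $S_{\un,1,\F}$ is more constructive but reaches the same place.

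One slip to fix: ``distinct simple subfunctors being in direct sum'' is not a valid general principle (three distinct lines in $\F^2$ are pairwise distinct simple subobjects whose sum is not direct). What actually forces the sum $\sum_{P}\langle[P]\rangle$ to be direct is the linear independence of the $[P]$'s in $\F\Proj(kGb)$, which you already note: it gives $\langle[P_i]\rangle\not\subseteq\sum_{j\neq i}\langle[P_j]\rangle$ for every $i$ (equivalently, the evaluation at $\un$ already has dimension $|\PIM(G,b)^T|$, so a sum of that many copies of a simple with one-dimensional value at $\un$ cannot collapse). The repair is immediate from what you wrote. Finally, the paper simply asserts $|\PIM(G,b)^T|=|\IBr(G,b)^T|$ without argument, so your careful check that the passages projective\,$\leftrightarrow$\,simple and right\,$\leftrightarrow$\,left are $\Gamma_b$-equivariant is a genuine addition.
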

\begin{proof}
By Lemma~3.5 in \cite{BY24}, one has $\F\Proj(-,kG) = |\IBr(G)|\cdot S_{\un,1,\F}$. Since $\F\Proj(-,kGb)^T\subseteq \F\Proj(-,kGb)\subseteq \F\Proj(-,kG)$, it follows that $\F\Proj(-,kGb)^T$ and $\F\Proj(-,kG)$ are also multiples of $S_{\un,1,\F}$ in $K_0(\FF^\Delta_{\F pp_k})$.  Since the $\F$-dimension of $\F\Proj(\un,kGb)^T$ is equal to $|\PIM(G,b)^T|=|\IBr(G,b)^T|$, the result follows. 
\end{proof}

\begin{corollary}\label{cor GaloisAlperinFunctorial}
The following statements are equivalent:
\begin{enumerate}[{\rm (i)}]
	\item For any finite group $G$ and any block $b$ of $\bF_pG$, Conjecture \ref{conj: GAWC original version} holds.
\item For any finite group $G$, any block of $\bF_pG$ with a nontrivial defect group, and any subgroup $T$ of $\Gamma_b$, we have
$$\sum_{\tau\in[G\backslash\S_G]}(-1)^{|\tau|} \F \Proj(-,kG_\tau b_\tau)^T=0\quad \text{in } K_0(\FF^\Delta_{\F pp_k}),$$
where $G_\tau$ and $b_\tau$ are defined as in Notation \ref{notation:chain}.
\end{enumerate}
\end{corollary}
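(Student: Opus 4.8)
The plan is to reduce the functorial identity in (ii) to the numerical identity in Theorem~\ref{thm Galois refinement}~(ii) via Lemma~\ref{lem Galoisstablefunctor}, exploiting that the classes of the simple functors form a $\Z$-basis of the Grothendieck group.

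First I would fix a finite group $G$, a block $b$ of $\bF_pG$ with nontrivial defect group, and a subgroup $T\le\Gamma_b$, and consider a chain $\tau=(1=P_0<P_1<\cdots<P_n)\in\S_G$. For such a $\tau$, the idempotent $b_\tau=\br_{P_n}^{\bF_pG}(b)\in Z(\bF_pG_\tau)$ is fixed by $\Gamma_b$: the Brauer homomorphism is defined coefficientwise, hence commutes with the coefficientwise action of $\Gamma$, so $\sigma(b_\tau)=\br_{P_n}(\sigma(b))=b_\tau$ for every $\sigma\in\Gamma_b$. Therefore $T\le\Gamma_b\le\Gamma_{b_\tau}$, so the functor $\F\Proj(-,kG_\tau b_\tau)^T$ is defined (with the convention that it is the zero functor when $b_\tau=0$, which is consistent with $|\IBr(G_\tau,b_\tau)^T|=0$ in that case). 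Applying Lemma~\ref{lem Galoisstablefunctor} to the pair $(G_\tau,b_\tau)$ then yields
$$\F\Proj(-,kG_\tau b_\tau)^T=|\IBr(G_\tau,b_\tau)^T|\cdot S_{\un,1,\F}\quad\text{in }K_0(\FF^\Delta_{\F pp_k}).$$

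Next I would take the signed sum of these equalities over a set $[G\backslash\S_G]$ of representatives, obtaining
$$\sum_{\tau\in[G\backslash\S_G]}(-1)^{|\tau|}\F\Proj(-,kG_\tau b_\tau)^T=\Bigl(\sum_{\tau\in[G\backslash\S_G]}(-1)^{|\tau|}|\IBr(G_\tau,b_\tau)^T|\Bigr)\, S_{\un,1,\F}$$
in $K_0(\FF^\Delta_{\F pp_k})$. Since $k$ is algebraically closed, the category $\FF^\Delta_{\F pp_k}$ is semisimple and the classes of its simple objects $S_{L,u,V}$ form a $\Z$-basis of $K_0(\FF^\Delta_{\F pp_k})$ (see~\ref{void:notaions related to functorial equiavlences}(v)); in particular $S_{\un,1,\F}\neq 0$ and an integer multiple $\lambda\cdot S_{\un,1,\F}$ vanishes if and only if $\lambda=0$. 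Hence, for this fixed triple $(G,b,T)$, the equality in (ii) holds if and only if $\sum_{\tau\in[G\backslash\S_G]}(-1)^{|\tau|}|\IBr(G_\tau,b_\tau)^T|=0$, which is exactly Theorem~\ref{thm Galois refinement}~(ii) for that triple.

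Finally, quantifying over all finite groups $G$, all blocks $b$ of $\bF_pG$ with nontrivial defect group, and all subgroups $T\le\Gamma_b$, statement (ii) of the corollary becomes equivalent to statement (ii) of Theorem~\ref{thm Galois refinement}, which by that theorem is equivalent to its statement (i), namely statement (i) of the corollary. The argument is essentially formal once Theorem~\ref{thm Galois refinement} and Lemma~\ref{lem Galoisstablefunctor} are in hand; the one point that genuinely requires care is the passage from the functorial identity back to the numerical one, which relies on the freeness of $K_0(\FF^\Delta_{\F pp_k})$ on the classes of the simple functors — without the semisimplicity of $\FF^\Delta_{\F pp_k}$ this implication would break down.
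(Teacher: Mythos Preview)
Your proposal is correct and follows exactly the same route as the paper, which simply records that the corollary ``follows easily from Lemma~\ref{lem Galoisstablefunctor} and Theorem~\ref{thm Galois refinement}.'' You have spelled out the two points the paper leaves implicit---that $T\le\Gamma_{b_\tau}$ because the Brauer map commutes with the Galois action, and that the backward implication uses the fact that $S_{\un,1,\F}$ is nonzero in $K_0$ by semisimplicity---but the argument is the intended one.
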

\begin{proof}
This follows easily from Lemma~\ref{lem Galoisstablefunctor} and Theorem~\ref{thm Galois refinement}.
\end{proof}

\begin{remark}
{\rm  For any finite group $G$ and a block idempotent $b$ of $\bF_pG$, Theorem~3.5 in \cite{BBY} implies that, in $K_0(\FF^\Delta_{\F pp_k})$,
	\begin{align*}
		\sum_{\tau\in[G\backslash\S_G]}(-1)^{|\tau|} \F T^\Delta(-,kG_\tau b_\tau)=\sum_{\tau\in[G\backslash\S_G]}(-1)^{|\tau|} \F \Proj(-,kG_\tau b_\tau)\,.
	\end{align*}
	Therefore, Corollary~\ref{cor GaloisAlperinFunctorial} can be seen as a Galois refinement of Theorem~3.5 in \cite{BBY}. 
}
\end{remark}

\section{Galois automorphisms and bijections between simple modules}\label{section: Funtorial equivalences and Brauer character bijections}

\begin{void}\label{void:Grothendieck group}
{\rm (i) Let $k$ be a field and let $R$ be a commutative ring with $1$. Let $A$, $B$ and $C$ be finite-dimensional $k$-algebras. For any $A$-module $M$, denote by $[M]$ the isomorphism class of $M$. Denote by $\S(A)$ the set of isomorphism classes of simple $A$-modules. Denote by $\R(A)$ the Grothendieck group of finite-dimensional $A$-modules with respect to short exact sequences. In other words, $\R(A)$ is a free $\Z$-module with a $\Z$-basis $\S(A)$. For any finite-dimensional $A$-module $M$, we have $[M]=[S_1]+\cdots+[S_n]$ in $\R(A)$, where $S_1,\cdots, S_n$ are the composition factors of $M$ (repeated according their multiplicities).  We set $\R(A,B):=\R(A\otimes_k B^{\rm op})$. Tensor products of bimodules induce a $\Z$-bilinear map $-\cdot_B-:\R(A,B)\times \R(B,C)\to \R(A,C)$, which extends to an $R$-bilinear map (denoted abusively by the same notation) $-\cdot_B-:R\otimes_\Z\R(A,B)\times R\otimes_\Z \R(B,C)\to R\otimes_\Z \R(A,C)$.
	
(ii) Let $k'$ be an extension of $k$ and let $A':=k'\otimes_k A$.  For an $A'$-module $U'$ and a $\sigma\in \Gal(k'/k)$, denote by $\lexp{\sigma}U'$ the $A'$-module which is equal to $U'$ as a module over the subalgebra $1\otimes A$ of $A'$, such that $x\otimes a$ acts on $U'$ as $\sigma^{-1}(x) \otimes a$ for all $a \in A$ and $x\in k'$.
We say that $U'$ {\it descends to} $k$, if there is an $A$-module $U$ such that $U'\cong k'\otimes_k U$.
In this special case, for any $\sigma\in\Gal(k'/k)$, the map sending $x\otimes u$ to $\sigma^{-1}(x)\otimes u$ induces an isomorphism $k'\otimes_k U\cong \lexp{\sigma}(k'\otimes_k U)$, where $u\in U$ and $x\in k'$. The action of $\Gal(k'/k)$ on $A'$-modules makes $\R(A')$ into a $\Z\Gal(k'/k)$-module, hence makes $R\otimes_\Z \R(A')$ into an $R\Gal(k'/k)$-module. It is clear that if $U'$ is simple, then $\lexp{\sigma}U'$ is simple. Hence $\S(A')$ is a $\Gal(k'/k)$-stable $\Z$-basis (resp. $R$-basis) of $\R(A')$ (resp. $R\otimes_\Z \R(A')$).
	
(iii)  Let $\eta$ be an element in $R\otimes_\Z \R(A,B)$. Then $\eta$ can be uniquely written as $\eta=\alpha_1[M_1]+\cdots+\alpha_s[M_s]$, where $M_i$ are finite-dimensional simple $A$-$B$-bimodules and $\alpha_i\in R$. In particular, $\eta$ induces an $R$-linear map 
	\begin{align*}
		\begin{split}
	\Phi_\eta:R\otimes_\Z \R(B)&\to R\otimes_\Z\R(A)\\
	[M] &\mapsto \eta\cdot_B[M]=\alpha_1[M_1\otimes_{kH}M] + \cdots+ \alpha_n[M_n\otimes_{kH} M]
	\end{split}
	\end{align*}
where $M$ is any finite-dimensional $kH$-module. Assume that there are  elements $\eta\in R\otimes_\Z \R(A,B)$ and $\omega\in R\otimes_\Z \R(B,A)$ such that 
\begin{align*}
	\eta \cdot_B\omega =[A]\,\, \text{in } R\otimes_\Z \R(A,A)\quad\text{and}\quad \omega \cdot_A\eta=[B]\,\, \text{in } R\otimes_\Z (B,B)\,.
\end{align*}
Then $\Phi_\eta$ and $\Phi_\omega$ are mutually inverse $R$-linear isomorphisms between $R\otimes_\Z \R(A)$ and $R\otimes_\Z \R(B)$. 

}
\end{void}

\begin{theorem}\label{theorem:Galois-equivariant bijections for general algebras}
Let $k\subseteq k'$ be an extension of fields and let $R$ be a commutative ring with $1$ containing $\Z$ as a unitary subring. Let $A$ and $B$ be finite-dimensional $k$-algebras. Let $A':=k'\otimes_k A$ and $B':=k'\otimes_k B$. Assume that $\Gal(k'/k)$ is cyclic and that there are  elements $\eta\in R\otimes_\Z \R(A,B)$ and $\omega\in R\otimes_\Z \R(B,A)$ such that 
\begin{align*}
	\eta \cdot_B\omega =[A]\,\, \text{in } R\otimes_\Z \R(A,A)\quad\text{and}\quad \omega \cdot_A\eta=[B]\,\, \text{in } R\otimes_\Z \R(B,B)\,.
\end{align*} Then there is a bijection $\S(A')\to \S(B')$ commuting with the action of $\Gal(k'/k)$.
\end{theorem}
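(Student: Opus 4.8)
The plan is to extend scalars from $k$ to $k'$, carry over the equivalence data $(\eta,\omega)$, observe that the induced isomorphism of Grothendieck groups is automatically $\Gal(k'/k)$-equivariant because it is built from bimodules defined over $k$, and finally convert this equivariant linear isomorphism into a genuine bijection of simple-module sets using that $\Gal(k'/k)$ is cyclic. For the first step, since $k'$ is flat over $k$, extension of scalars is exact, hence induces $\Z$-linear maps $\R(A,B)\to\R(A',B')$, $[M]\mapsto[k'\otimes_k M]$ (and likewise for the remaining pairs among $A,B$), which by flat base change are compatible with the composition products of \ref{void:Grothendieck group}(i) and which send $[A]\mapsto[A']$ and $[B]\mapsto[B']$. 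Letting $\eta'$, $\omega'$ denote the images of $\eta$, $\omega$, we thus obtain $\eta'\cdot_{B'}\omega'=[A']$ and $\omega'\cdot_{A'}\eta'=[B']$, so \ref{void:Grothendieck group}(iii) applied to the pair $(A',B')$ yields mutually inverse $R$-linear isomorphisms $\Phi_{\eta'}\colon R\otimes_\Z\R(B')\to R\otimes_\Z\R(A')$ and $\Phi_{\omega'}$ in the other direction.

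Next I would check $\Gal(k'/k)$-equivariance, writing $\Gamma':=\Gal(k'/k)$. Each bimodule $k'\otimes_k M$ contributing to $\eta'$ or $\omega'$ descends to $k$, so by \ref{void:Grothendieck group}(ii) we have $\lexp{\sigma}{(k'\otimes_k M)}\cong k'\otimes_k M$ for every $\sigma\in\Gamma'$. Since the twist $\lexp{\sigma}{(-)}$ on $A'$-$B'$-bimodules is restriction of scalars along the ring automorphism of $A'$ (resp.\ $B'$) induced by $\sigma^{-1}$, one checks directly that the identity map gives an isomorphism $\lexp{\sigma}{(X\otimes_{B'}Y)}\cong\lexp{\sigma}{X}\otimes_{B'}\lexp{\sigma}{Y}$ of bimodules, so $\lexp{\sigma}{(-)}$ commutes with the composition products. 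Combining these two facts gives $\Phi_{\eta'}(\lexp{\sigma}{x})=\lexp{\sigma}{\Phi_{\eta'}(x)}$ for all $x$. As the $\Gamma'$-action on $R\otimes_\Z\R(A')$ is precisely the $R$-linear extension of the permutation action of $\Gamma'$ on the $\Z$-basis $\S(A')$ (and similarly for $B'$), we conclude that $\Phi_{\eta'}$ and $\Phi_{\omega'}$ are $\Gamma'$-equivariant isomorphisms between these two permutation $R$-modules.

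The remaining step, which I expect to be the only genuinely non-formal one, is to pass from such a $\Gamma'$-equivariant $R$-linear isomorphism to a $\Gamma'$-equivariant bijection $\S(A')\to\S(B')$. Since $\S(A')\sqcup\S(B')$ is finite, $\Gamma'$ acts on it through a finite quotient $\bar\Gamma$, which is cyclic because $\Gamma'$ is, and the isomorphism above is $\bar\Gamma$-equivariant. For each subgroup $T$ of $\bar\Gamma$, restricting to $T$-fixed points makes $\Phi_{\eta'}$ an $R$-linear isomorphism $(R\otimes_\Z\R(A'))^T\cong(R\otimes_\Z\R(B'))^T$; these are free $R$-modules of ranks equal to the numbers of $T$-orbits on $\S(A')$ and on $\S(B')$, respectively, so those orbit numbers coincide for every $T\le\bar\Gamma$. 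Writing $\S(A')\cong\bigsqcup_H a_H(\bar\Gamma/H)$ and $\S(B')\cong\bigsqcup_H b_H(\bar\Gamma/H)$ as $\bar\Gamma$-sets, and using that $\bar\Gamma/H$ has exactly $|\bar\Gamma:TH|$ orbits under $T$, this equality reads $\sum_H(a_H-b_H)\,|\bar\Gamma:TH|=0$ for all $T\le\bar\Gamma$. Indexing the subgroups of the cyclic group $\bar\Gamma$ by the divisors of $n:=|\bar\Gamma|$, the coefficient matrix of this linear system is the gcd-matrix $\big(\gcd(d,e)\big)_{d,e\mid n}$, whose determinant $\prod_{d\mid n}\phi(d)$ is nonzero; hence $a_H=b_H$ for all $H$, i.e.\ $\S(A')\cong\S(B')$ as $\bar\Gamma$-sets, equivalently as $\Gamma'$-sets, which is the assertion.

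I expect the main obstacle to be exactly this last point: an abstract equivariant linear isomorphism records no more than the orbit-count data $T\mapsto|T\backslash\S(A')|$, and it is the hypothesis that $\Gal(k'/k)$ is cyclic that upgrades equal orbit counts for all subgroups to an isomorphism of $\Gamma'$-sets. Alternatively, one could first recover $|\S(A')^T|=|\S(B')^T|$ for every $T$ from the orbit counts via Burnside's lemma — possible precisely because every subgroup of a cyclic group is cyclic — and then conclude with Lemma~\ref{lemma:permutation isomorphic}; either way the role of cyclicity is unavoidable. Everything else reduces to the exactness and base-change compatibility of $k'\otimes_k-$ together with the formal machinery already set up in \ref{void:Grothendieck group}.
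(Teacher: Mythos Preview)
Your argument is correct and follows the same overall strategy as the paper: extend scalars to get $\eta',\omega'$, check that $\Phi_{\eta'}$ is a $\Gal(k'/k)$-equivariant $R$-linear isomorphism (using that the bimodules descend to $k$), and then exploit cyclicity to pass from an equivariant isomorphism of permutation modules to an isomorphism of the underlying $\Gamma'$-sets.

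The only point of departure is the final combinatorial step. The paper observes that the trace of $\sigma$ on $R\otimes_\Z\R(A')$ is basis-independent; computing it with the two $\Gamma'$-stable bases $\S(A')$ and $\Phi_{\eta'}(\S(B'))$ gives $|\S(A')^{\langle\sigma\rangle}|=|\S(B')^{\langle\sigma\rangle}|$ for every $\sigma$, which (since every subgroup of a cyclic group is of this form) feeds directly into Lemma~\ref{lemma:permutation isomorphic}. You instead compare the $T$-fixed submodules, extract equal orbit counts from IBN for the commutative ring $R$, and then invert the gcd-matrix of a cyclic group. Both routes are standard and equivalent; the trace argument is a little shorter, while your rank argument makes the role of the hypothesis $\Z\subseteq R$ (needed so that the ring distinguishes the integers appearing, respectively so that $R\neq 0$ and IBN holds) equally transparent.
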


\begin{proof}
The proof is inspired by the proof of \cite[Theorem 1]{H24}. Write $\eta=\alpha_1[M_1]+\cdots+\alpha_s[M_s]$ and $\omega=\beta_1[N_1]+\cdots+\beta_t[N_t]$, where $M_i$ (resp. $N_j$) are finite-dimensional simple $A$-$B$-bimodules (resp. $B$-$A$-bimodules) and $\alpha_i, \beta_j\in R$. Let 
$$\eta'=\alpha_1[k'\otimes_k M_1]+\cdots+\alpha_s[k'\otimes_k M_s]\in R\otimes_\Z \R(A',B')$$ 
and 
$$\omega'=\beta_1[k'\otimes_k N_1]+\cdots+\beta_t[k'\otimes_k N_t]\in R\otimes_\Z \R(A',B').$$
Then we have 
\begin{align*}
	\eta' \cdot_{B'}\omega' =[A']\,\, \text{in } R\otimes_\Z \R(A',A')\quad\text{and}\quad \omega' \cdot_{A'}\eta'=[B']\,\, \text{in } R\otimes_\Z (B',B')\,.
\end{align*}
Hence by \ref{void:Grothendieck group} (iii), $\Phi_{\eta'}$ and $\Phi_{\omega'}$ are mutually inverse $R$-linear isomorphisms between $R\otimes_\Z \R(A')$ and $R\otimes_\Z \R(B')$. 

Recall from \ref{void:Grothendieck group} (ii) that $R\otimes_\Z \R(A')$ and $R\otimes_\Z \R(B')$ are $R\Gal(k'/k)$-modules. Again by \ref{void:Grothendieck group} (ii), we have $\lexp{\sigma}\eta'=\eta'$ and $\lexp{\sigma}\omega'=\omega'$. Now we can easily check that 
$$\Phi_{\eta'}:R\otimes_\Z \R(B')\to R\otimes_\Z \R(A')$$
is an isomorphism of $R\Gal(k'/k)$-modules. Indeed, for any $\sigma\in \Gal(k'/k)$ and any $[M]\in \R(B')$ with $M$ being a finite-dimensional $B'$-module, one has
$$\lexp{\sigma}(\Phi_{\eta'}([M]))=\lexp{\sigma}(\eta'\cdot_{B'} [M])=\lexp{\sigma}\eta'\cdot_{B'} \lexp{\sigma}[M]=\eta'\cdot_B \lexp{\sigma}[M]=\Phi_{\eta'}(\lexp{\sigma}[M]).$$
Since $\S(B')$ is a $\Gal(k'/k)$-stable $R$-basis of $R\otimes_\Z \R(B')$, its image $\Phi_{\eta'}(\S(B'))$ is a $\Gal(k'/k)$-stable $R$-basis of $R\otimes_\Z R(A')$. Since the character $\chi$ of the $R\Gal(k'/k)$-module $R\otimes_\Z \R(A')$ is independent of the choice of an $R$-basis, we see that for any $\sigma\in \Gal(k'/k)$, we have 
$$\chi(\sigma)=|\S(A')^{\langle\sigma\rangle}|=|\Phi_{\eta'}(\S(B'))^{\langle\sigma\rangle}|.$$ 
Since $\Gal(k'/k)$ is a cyclic group, by Lemma \ref{lemma:permutation isomorphic}, $\S(A')$ and $\Phi_{\eta'}(\S(B'))$ are isomorphic as $\Gal(k'/k)$-sets. Hence $\S(A')$ and $\S(B')$ are isomorphic as $\Gal(k'/k)$-sets.
\end{proof}

\begin{proof}[Proof of Theorem \ref{theorem:funtorial equivalences and Brauer character bijections}]
Let $t=\exp(G\times H)_{p'}$, $\xi\in \bF_p$ a primitive $t$-th root of unity and $k'=\F_p[\xi]$. Since $k'$ is a splitting field for both $G$ and $H$, the values of any irreducible character of $G$ (resp. $H$) over $\bF_p$ are contained in $k'$, hence we have $\IBr(G,b)=\Irr(k'Gb)$ and $\IBr(H,c)=\Irr(k'Hc)$.  Let $\Gamma_t=\Gal(k'/\F_p)$. Then $(\Gamma_t)_b=\Gal(k'/k)=(\Gamma_t)_c$ is a cyclic group. Let $\sigma\in \Gamma_b=\Gal(\bF_p/k)$ and $\tilde{\sigma}$ be the image of $\sigma$ under the restriction map $\Gamma_b\to (\Gamma_t)_b$.  By Notation \ref{notation:characters of groups} (i) we have the following commutative diagram:
$$\xymatrix{
\S(\bF_pGb)\ar[r]\ar[d]^{\sigma} &	\IBr(G,b)\ar@{=}[r]\ar[d]^{\sigma} \ar[l]& \IBr(k'Gb)\ar[d]^{\tilde{\sigma}}\ar[r] & \S(k'Gb)\ar[l]\ar[d]^{\tilde{\sigma}} \\
\S(\bF_pGb)\ar[r] &	\IBr(G,b)\ar@{=}[r]\ar[l] &  \IBr(k'Gb) \ar[r]  &  \S(k'Gb)\ar[l]
}$$
Hence it suffices to show that $\S(k'Gb)$ and $\S(k'Hc)$ are isomorphic as $\Gal(k'/k)$-sets. By \ref{void:notaions related to functorial equiavlences} (iv), there are elements $\eta\in RT^\Delta(kGb,kHc)$ and $\omega\in RT^\Delta(kHc,kGb)$ such that 
\begin{align*}
	\eta \cdot_H\omega =[kGb]\,\, \text{in } RT^\Delta(kGb,kGb)\quad\text{and}\quad \omega \cdot_G\eta=[kHc]\,\, \text{in } RT^\Delta(kHc,kHc)\,.
\end{align*}
Hence we also have
\begin{align*}
	\eta \cdot_{kHc}\omega =[kGb]\,\, \text{in } R\otimes_\Z \R(kGb,kGb)\quad\text{and}\quad \omega \cdot_{kGb}\eta=[kHc]\,\, \text{in } R\otimes_\Z \R(kHc,kHc)\,.
\end{align*}
Now the statement follows from Theorem \ref{theorem:Galois-equivariant bijections for general algebras}.
\end{proof}

\begin{proof}[Proof of Corollary \ref{cor BrouetoGaloisAlperin}]
Since $P$ is abelian, it is well known that 
$$\bigsqcup_{Q\in[G\backslash\FF_G]} \W(N_G(Q)/Q,\bar{b}_Q)=\W((N_G(P)/P),\bar{b}_P),$$ and the inflation map $\W((N_G(P)/P),\bar{b}_P)\to \IBr(N_G(P),c)$ is a bijection. By definition, one easily sees that this bijection is commutative with the action of $\Gamma_b$. Now the statement follows from Theorem \ref{theorem:funtorial equivalences and Brauer character bijections}.
	\end{proof}

\section{Galois conjugate blocks are functorially equivalent}\label{section: Galois conjugate blocks are functorially equivalent}

Let $k$ denote an algebraically closed field of characteristic $p$ and $\F$ denote an algebraically closed field of characteristic zero. Recall that the category $\FF_{\F pp_k}^\Delta$ of diagonal $p$-permutation functors over $\F$ is semisimple, and the simple functors $S_{L,u,V}$ are parametrized by the isomorphism classes of triples $(L,u,V)$ where $(L,u)$ is a $\DD$-pair and $V$ is a simple $\F\Out(L,u)$-module.

Let $(G,b)$ and $(H,c)$ be group-block pairs over $k$. Since the category $\FF_{\F pp_k}^\Delta$ is semisimple, the functors $\F T^\Delta_{(G,b)}$ and $\F T^\Delta_{(H,c)}$ are direct sums of simple functors. It follows that the pairs $(G,b)$ and $(H,c)$ are functorially equivalent over $\F$ if and only if the multiplicities of the simple functor $S_{L,u,V}$ in $\F T^\Delta_{(G,b)}$ and $\F T^\Delta_{(H,c)}$ are the same. We now recall a formula for these multiplicities.

Let $(G,b)$ be a group-block pair over $k$ and let $(D,e_D)$ be a maximal $(G,b)$-Brauer pair. For any subgroup $P\le D$, let $e_P$ be the unique block idempotent of $kC_G(P)$ with $(P,e_P)\le (D,e_D)$. Let $\FF$ be the fusion system of $(G,b)$ with respect to $(D,e_D)$ and let $[\FF]$ denote a set of isomorphism classes of objects of $\FF$. 

Let $(L,u)$ be a $\DD$-pair. For $P\in\FF$, we denote by $\mathcal{P}_{(P,e_P)}(L,u)$ the set of group isomorphisms $\pi:L\to P$ with $\pi\circ u\circ \pi^{-1}\in\Aut_\FF(P)$. This is an $\left(N_G(P,e_P), \Aut(L,u)\right)$-biset via $$g\cdot \pi\cdot \varphi=i_g\pi\varphi$$ for $g\in N_G(P,e_P)$, $\pi\in\mathcal{P}_{(P,e_P)}(L,u)$ and $\varphi\in \Aut(L,u)$. We denote by $[\mathcal{P}_{(P,e_P)}(L,u)]$ a set of representatives of $N_G(P,e_P)\times \Aut(L,u)$-orbits of $\mathcal{P}_{(P,e_P)}(L,u)$. 

For $\pi \in [\mathcal{P}_{(P,e_P)}(L,u)]$, the stabilizer in $\Aut(L,u)$ of the $N_G(P,e_P)$-orbit of $\pi$ is denoted by $\Aut(L,u)_{\overline{(P,e_P,\pi)}}$. One has
\begin{align*}
	\Aut(L,u)_{\overline{(P,e_P,\pi)}}=\{\varphi\in \Aut(L,u)\mid \pi\varphi \pi^{-1}\in \Aut_{\FF}(P)\}\,.
\end{align*}

Moreover, for $\pi\in \mathcal{P}_{(P,e_P)}(L,u)$, we denote by $\PIM(kC_G(P)e_P,u)$ a set of isomorphism classes of projective indecomposable $kC_G(P)e_P$-modules that are fixed by $\pi u\pi^{-1}$. This is an $\Aut(L,u)_{\overline{(P,e_P,\pi)}}$-set via $U\cdot\varphi=\lexp{g}U$ where $g\in N_G(P,e_P)$ with $i_g\pi\varphi=\pi$. We denote by $\F\Proj(kC_G(P)e_P,u)$ the $\F$-span of $\PIM(kC_G(P)e_P,u)$. 

\begin{theorem}[{\cite[Theorem~8.22(b)]{BY22}}]\label{thm multiplicityformula}
	The multiplicity of a simple diagonal $p$-permutation functor $S_{L,u,V}$ in the functor $\F T^\Delta_{(G,b)}$ is equal to the $\F$-dimension of
	\begin{align*}
		\bigoplus_{P \in [\FF]} \bigoplus_{\pi \in [\mathcal{P}_{(P,e_P)}(L,u)]} \F\Proj(kC_G(P)e_P,u)\otimes_{\Aut(L,u)_{\overline{(P,e_P,\pi)}}} V\,.
	\end{align*}
\end{theorem}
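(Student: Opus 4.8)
The plan is to reduce the assertion, using the semisimplicity of $\FF^\Delta_{\F pp_k}$, to an explicit computation of the ``essential evaluation'' of the functor $F:=\F T^\Delta_{(G,b)}$ at the $\DD$-pair $(L,u)$. Since $\FF^\Delta_{\F pp_k}$ is semisimple with simple objects the $S_{L,u,V}$, the multiplicity of $S_{L,u,V}$ in $F$ is a well-defined integer, and by the structure theory of \cite{BY22} it is recorded by the evaluation of $F$ at $L\langle u\rangle$: there is a canonical subquotient $\overline{F}(L,u)$ of $F(L\langle u\rangle)=\F T^\Delta(kL\langle u\rangle,kGb)$, an $\F\Out(L,u)$-module, such that $\overline{S_{L',u',V'}}(L,u)$ equals $V'$ when $(L',u')\cong(L,u)$ and vanishes otherwise; concretely $\overline{F}(L,u)$ is obtained from $F(L\langle u\rangle)$ by killing the span of the classes of those indecomposable diagonal $p$-permutation bimodules whose (twisted diagonal) vertex fails to be ``full''. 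Hence the multiplicity of $S_{L,u,V}$ in $F$ equals $\dim_\F\bigl(\overline{F}(L,u)\otimes_{\F\Out(L,u)}V\bigr)$, and — since $\Inn(L\langle u\rangle)$ acts trivially on all modules in sight — the theorem will follow once $\overline{F}(L,u)$ is identified, as an $\F\Aut(L,u)$-module, with $\bigoplus_{P\in[\FF]}\bigoplus_{\pi\in[\mathcal{P}_{(P,e_P)}(L,u)]}\Ind_{\Aut(L,u)_{\overline{(P,e_P,\pi)}}}^{\Aut(L,u)}\bigl(\F\Proj(kC_G(P)e_P,u)\bigr)$; for then the elementary identity $\dim_\F\bigl(\Ind_{\Aut(L,u)_{\overline{(P,e_P,\pi)}}}^{\Aut(L,u)}(M)\otimes_{\F\Aut(L,u)}V\bigr)=\dim_\F\bigl(M\otimes_{\F\Aut(L,u)_{\overline{(P,e_P,\pi)}}}V\bigr)$ returns exactly the displayed sum.

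The core of the argument is the computation of $\overline{F}(L,u)$. A $\Z$-basis of $T^\Delta(kL\langle u\rangle,kGb)$ is given by the indecomposable diagonal $p$-permutation $(kL\langle u\rangle,kGb)$-bimodules, and by Brou\'e's theory of $p$-permutation modules each of these is determined up to isomorphism by the $(L\langle u\rangle\times G)$-conjugacy class of a twisted diagonal vertex $\Delta(\phi)=\{(\phi(q),q)\mid q\in Q\}$ ($Q\le G$ a $p$-subgroup, $\phi\colon Q\hookrightarrow L$ an injective homomorphism), together with an isomorphism class of projective indecomposable module of the Brauer quotient of $kL\langle u\rangle\otimes_k(kGb)^{\mathrm{op}}$ at $\Delta(\phi)$, taken modulo the natural action of $N_{L\langle u\rangle\times G}(\Delta(\phi))/C_{L\langle u\rangle\times G}(\Delta(\phi))$. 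Passing to $\overline{F}(L,u)$ kills every summand whose vertex has $\phi$ non-surjective, so $\overline{F}(L,u)$ is spanned by classes of bimodules with a \emph{full} vertex $\Delta(\phi)$, $\phi\colon P\xrightarrow{\ \sim\ }L$ for some $p$-subgroup $P\le G$. A direct computation with conjugation by elements $(u,g)\in L\langle u\rangle\times G$ shows that such a vertex has a lift of $u$ in its normalizer exactly when the automorphism $\phi^{-1}\circ u\circ\phi$ of $P$ lies in $\Aut_G(P)\subseteq\Aut_\FF(P)$; writing $\pi:=\phi^{-1}\colon L\xrightarrow{\ \sim\ }P$ and following block idempotents through the Brauer pair inclusions, this is precisely the condition $\pi\in\mathcal{P}_{(P,e_P)}(L,u)$, where $e_P$ is taken with $(P,e_P)\le(D,e_D)$. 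For such a $\pi$, the Brauer quotient at $\Delta(\phi)$ produces on the $G$-side the block $kC_G(P)e_P$, and the projective indecomposables that both occur there and are stable under the induced action of $u$ are exactly those of $\PIM(kC_G(P)e_P,u)$. Collapsing $(L\langle u\rangle\times G)$-conjugacy then replaces the admissible groups $P$ by the set $[\FF]$ and the admissible maps $\phi$ by biset-orbit representatives $[\mathcal{P}_{(P,e_P)}(L,u)]$, while the residual $\Aut(L,u)$-action induces each summand $\F\Proj(kC_G(P)e_P,u)$ up from its stabilizer $\Aut(L,u)_{\overline{(P,e_P,\pi)}}$, giving the desired description of $\overline{F}(L,u)$.

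I expect the main difficulty to be making this last identification precise: establishing a faithful correspondence between, on one hand, the $(L\langle u\rangle\times G)$-conjugacy classes of full twisted diagonal vertices together with the projective Brauer-quotient data surviving in $\overline{F}(L,u)$, and, on the other hand, the combinatorial datum $\bigl([\mathcal{P}_{(P,e_P)}(L,u)],\ \Aut(L,u)_{\overline{(P,e_P,\pi)}},\ \PIM(kC_G(P)e_P,u)\bigr)$ of the statement, all while tracking block idempotents through the Brauer pair inclusions $(P,e_P)\le(D,e_D)$ so that exactly the block $kC_G(P)e_P$ — and not a larger sum of blocks of $kC_G(P)$ — occurs. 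Two technical points sit inside this: reconciling the Brauer construction on bimodules with composition in $Rpp_k^\Delta$, so that ``factoring through smaller $\DD$-pairs'' is matched with ``having a non-full vertex'' (this is what makes $\overline{F}(L,u)$ well defined and computable from vertices); and unwinding the semidirect-product structure of $L\langle u\rangle$, so that the $\langle u\rangle$-part of the vertex contributes precisely the $u$-fixed-point condition defining $\PIM(kC_G(P)e_P,u)$ and the $\Aut(L,u)$-action appearing in the statement. Once $\overline{F}(L,u)$ is pinned down, the passage to the multiplicity formula is purely formal, via semisimplicity and the induction--tensor identity recorded in the first paragraph.
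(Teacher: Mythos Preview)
The paper does not prove this statement: Theorem~\ref{thm multiplicityformula} is quoted verbatim from \cite[Theorem~8.22(b)]{BY22} and is used as a black box in the proof of Theorem~\ref{theorem: Galois conjugate blocks are func equ}. There is therefore no ``paper's own proof'' to compare your proposal against.

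That said, your outline is a faithful sketch of the strategy actually used in \cite{BY22}: one exploits the semisimplicity of $\FF^\Delta_{\F pp_k}$ to reduce the multiplicity computation to the evaluation of an ``essential'' subquotient $\overline{F}(L,u)$ of $F(L\langle u\rangle)$ as an $\F\Out(L,u)$-module, and then identifies this subquotient by classifying the indecomposable diagonal $p$-permutation $(kL\langle u\rangle,kGb)$-bimodules with full twisted diagonal vertex via vertex/source/Brauer-quotient data. The two technical points you flag---that killing non-full vertices matches factoring through smaller $\DD$-pairs, and that the $\langle u\rangle$-component of the normalizer of the vertex is what forces the $u$-fixed condition on $\PIM(kC_G(P)e_P,u)$---are indeed the crux of the argument in \cite{BY22}. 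One small imprecision: the condition $\pi u\pi^{-1}\in\Aut_\FF(P)$ is genuinely a fusion-system condition and need not coincide with $\pi u\pi^{-1}\in\Aut_G(P)$ in general, so your parenthetical ``$\Aut_G(P)\subseteq\Aut_\FF(P)$'' should not be read as an equality; the passage from $G$-conjugacy of vertices to $\FF$-isomorphism classes of $P$ and to the set $\mathcal{P}_{(P,e_P)}(L,u)$ requires the block-theoretic input that controls $(G,b)$-Brauer pairs, not merely $G$-conjugation. With that caveat, your plan is correct in spirit, but for the purposes of the present paper the result is simply cited.
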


Let $\F_p$ denote the prime subfield of $k$ and let $\Gamma=\Gal(k/\F_p)$ denote the Galois group. Note that $\Gamma$ acts on $kG$ and on $Z(kG)$ via $\F_p$-algebra automorphisms by applying $\sigma\in\Gamma$ to the coefficients of the elements of $kG$. We say that the group-block pairs $(G,b)$ and $(G,b')$ are \textit{$\Gamma$-conjugate}, if $b'=\sigma(b)$ for some $\sigma\in \Gamma$. 

Let $\sigma:k\to k$ be a field automorphism. It induces a ring isomorphism $\sigma:kG\to kG$ still denoted by $\sigma$. Note that the map $\sigma$ permutes the blocks of $kG$. Since the action of $\Gamma$ commutes with the action of $G$ and Brauer morphisms, it is easy to prove that Galois conjugate blocks have the same defect groups and the same fusion systems:

\begin{lemma}[cf. {\cite[Lemma~9.6.5]{Lin18b}}]\label{lem galoisconjugatepairs}
	Let $(G,b)$ be a group-block pair over $k$, let $(P,e)$ and $(Q,f)$ be Brauer pairs on $kG$ and let $\gamma\in \Gamma$. 
	\begin{enumerate}[(i)]
		\item $(P,e)$ is a $(G,b)$-Brauer pair if and only $(P,\sigma(e))$ is a $(G,\sigma(b))$-Brauer pair.
		\item We have $(Q,f)\le (P,e)$ if and only if $(Q,\sigma(f))\le (P,\sigma(e))$.
		\item $(P,e)$ is a maximal $(G,b)$-Brauer pair if and only if $(P,\sigma(e))$ is a maximal $(G,\sigma(b))$-Brauer pair. In this case, one has the equality $\mathcal{F}_{(P,e)}(G,b)=\mathcal{F}_{(P,\sigma(e))}(G,\sigma(b))$ of fusion systems. 
	\end{enumerate}
\end{lemma}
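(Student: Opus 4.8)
The plan is to deduce all three statements from one elementary observation: a field automorphism $\sigma$ of $k$ extends to an $\F_p$-algebra automorphism of $kG$, still denoted $\sigma$, which fixes every element of $G$ and acts solely on coefficients. I would first record, as a preliminary remark, the consequences of this that will be used repeatedly: (a) $\sigma$ restricts to an $\F_p$-algebra automorphism of $kH$ for every subgroup $H\le G$ (so in particular of $kC_G(R)$ for every $p$-subgroup $R$), and hence permutes the block idempotents of $kC_G(R)$; (b) $\sigma$ commutes with the conjugation action of $G$, i.e. $\sigma(\lexp{g}{x})=\lexp{g}{\sigma(x)}$ for all $g\in G$ and $x\in kG$; (c) $\sigma$ commutes with every Brauer homomorphism $\br_R^{kG}\colon (kG)^R\to kC_G(R)$ and with every relative Brauer homomorphism $\br_R^{kC_G(Q)}\colon (kC_G(Q))^R\to kC_G(R)$ for $Q\trianglelefteq R$. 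All three facts are immediate, since $\sigma$ moves no group element and the maps in (b), (c) act only on the support.

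Granting this, (i) is immediate: $(P,e)$ is a $(G,b)$-Brauer pair if and only if $\br_P^{kG}(b)\,e=e$, and applying the ring automorphism $\sigma$ (which by (c) commutes with $\br_P^{kG}$) turns this into the condition $\br_P^{kG}(\sigma(b))\,\sigma(e)=\sigma(e)$, i.e. that $(P,\sigma(e))$ is a $(G,\sigma(b))$-Brauer pair; by (a), $\sigma(e)$ is indeed a block idempotent of $kC_G(P)$. For (ii) I would first dispose of a normal inclusion $(Q,f)\trianglelefteq(P,e)$ with $Q\trianglelefteq P$, whose defining conditions are that $f$ be $P$-stable and that $\br_P^{kC_G(Q)}(f)\,e=e$; by (b) the first condition is preserved and reversed by $\sigma$, and by (c) the second becomes $\br_P^{kC_G(Q)}(\sigma(f))\,\sigma(e)=\sigma(e)$ after applying $\sigma$, so $(Q,f)\trianglelefteq(P,e)$ if and only if $(Q,\sigma(f))\trianglelefteq(P,\sigma(e))$. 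A general inclusion $(Q,f)\le(P,e)$ is a finite chain of normal inclusions, and applying $\sigma$ termwise (resp. $\sigma^{-1}$ for the converse) gives the general case.

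For (iii), statements (i) and (ii) say precisely that $(R,e_R)\mapsto(R,\sigma(e_R))$ is an isomorphism of posets from the $(G,b)$-Brauer pairs onto the $(G,\sigma(b))$-Brauer pairs, and that this isomorphism is the identity on first coordinates; hence it carries maximal elements to maximal elements, giving the first claim. For the fusion systems, fix a maximal $(G,b)$-Brauer pair $(P,e)$; then $(P,\sigma(e))$ is a maximal $(G,\sigma(b))$-Brauer pair with the same $P$. For $Q\le P$ let $e_Q$ be the unique block of $kC_G(Q)$ with $(Q,e_Q)\le(P,e)$; by (ii) together with the uniqueness of the block below a given maximal Brauer pair, $\sigma(e_Q)$ is the corresponding block for $(P,\sigma(e))$. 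Now for $Q,R\le P$ and $g\in G$, the conjugation map $c_g\colon Q\to R$ lies in $\mathcal{F}_{(P,e)}(G,b)$ if and only if $\lexp{g}{(Q,e_Q)}\le(R,e_R)$; since $\sigma(\lexp{g}{e_Q})=\lexp{g}{\sigma(e_Q)}$ by (b), applying (ii) shows this is equivalent to $\lexp{g}{(Q,\sigma(e_Q))}\le(R,\sigma(e_R))$, i.e. to $c_g\in\mathcal{F}_{(P,\sigma(e))}(G,\sigma(b))$. As both fusion systems have the subgroups of $P$ as objects, they coincide.

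I do not expect a real obstacle here; the only points needing care are to use formulations of Brauer-pair inclusion and of the fusion system that make the $\sigma$-equivariance transparent — namely the chain-of-normal-inclusions description of $\le$ and the description of $\mathcal{F}_{(P,e)}(G,b)$ via the relation $\lexp{g}{(Q,e_Q)}\le(R,e_R)$ — and to invoke the standard uniqueness of the block $e_Q$ with $(Q,e_Q)\le(P,e)$. Equivalently, one could isolate the poset isomorphism of the first two parts as the single substantive point and deduce everything else formally, which is the route I would take in the write-up.
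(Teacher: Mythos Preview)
Your proposal is correct and follows precisely the approach indicated in the paper: the paper does not give a detailed proof but only remarks (immediately before the lemma, and via the citation of \cite[Lemma~9.6.5]{Lin18b}) that the action of $\Gamma$ commutes with the $G$-action and with Brauer homomorphisms, which is exactly your preliminary observation (a)--(c) from which you derive (i)--(iii). Your write-up is a faithful and careful expansion of that one-line justification.
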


Now we are ready to prove Theorem \ref{theorem: Galois conjugate blocks are func equ}.

\begin{proof}[Proof of Theorem \ref{theorem: Galois conjugate blocks are func equ}]
	By the arguments above, it suffices to prove that the multiplicities of the simple diagonal $p$-permutation functors in $\F T^\Delta_{(G,b)}$ and $\F T^\Delta_{(G,\sigma(b))}$ are the same. We use the formula in Theorem~\ref{thm multiplicityformula}.
	
	Let $S_{L,u,V}$ be a simple diagonal $p$-permutation functor over $\F$. Let $(D,e_D)$ be a maximal $(G,b)$-Brauer pair and, for each $P\le D$, let $e_P$ be the unique block of $kC_G(P)$ with $(P,e_P)\le (D,e_D)$. Let $\FF$ denote the fusion system of $(G,b)$ with respect to $(D,e_D)$. By Lemma~\ref{lem galoisconjugatepairs}, $\FF$ is also the fusion system of the pair $(G,\sigma(b))$ with respect to the maximal $(G,\sigma(b))$-pair $(D,\sigma(e_D))$. 
	
 Since $\FF$ is a common fusion system, for every $P\in [\FF]$, one has $$\mathcal{P}_{(P,e_P)}(L,u)=\mathcal{P}_{(P,\sigma(e_P))}(L,u)\,.$$ Since also $N_G(P,e_P)=N_G(P,\sigma(e_P))$, we may choose representatives so that
	\begin{align*}
		\left[\mathcal{P}_{(P,e_P)}(L,u)\right]	=\left[\mathcal{P}_{(P,\sigma(e_P))}(L,u)\right]\,.
\end{align*}
Now fix $\pi\in \left[\mathcal{P}_{(P,e_P)}(L,u)\right]$. Then $\pi i_u\pi^{-1}=i_g$ for some $g\in N_G(P,e_P)=N_G(P,\sigma(e_P))$. A projective indecomposable $kC_G(P)e_P$-module $S$ is $u$-fixed if and only if 
\begin{align*}
	\lexp{g}S\cong S \quad\text{as } kC_G(P)e_P\text{-modules}.
\end{align*}
Applying $\sigma$ to this isomorphism gives
\begin{align*}
	\lexp{\sigma}{\left(\lexp{g}S\right)}\cong \lexp{\sigma}S \quad\text{as } kC_G(P)\sigma(e_P)\text{-modules}.
\end{align*}
Since $\lexp{\sigma}{\left(\lexp{g}S\right)}=\lexp{g}{\left(\lexp{\sigma}S\right)}$, it follows that the map $S\mapsto \lexp{\sigma}S$ is a bijection between the sets $\PIM(kC_G(P)e_P,u)$ and $\PIM(kC_G(P)\sigma(e_P),u)$. This bijection induces an isomorphism of $\F$-vector spaces
\begin{align*}
	\F\Proj(kC_G(P)e_P,u) \cong \F\Proj(kC_G(P)\sigma(e_P),u)\,.
\end{align*}
Next, we show that this is an isomorphism of right $\F\Aut(L,u)_{\overline{(P,e_P,\pi)}}$-modules. Since $\Aut(L,u)_{\overline{(P,e_P,\pi)}}=\Aut(L,u)_{\overline{(P,\sigma(e_P),\pi)}}$, Theorem~\ref{thm multiplicityformula} will then imply the result. 

Let $\varphi\in \Aut(L,u)_{\overline{(P,e_P,\pi)}}$. Let $g\in N_G(P,e_P)=N_G(P,\sigma(e_P))$ such that $ i_g\pi\varphi =\pi$. For any $U\in \F\Proj(kC_G(P)e_P,u)$, one has
\begin{align*}
	\lexp{\sigma}{(U\cdot\varphi)}=\lexp{\sigma}{(\lexp{g}U)}=\lexp{g}{(\lexp{\sigma}U)} =\lexp{\sigma}U\cdot \varphi\,.
\end{align*}
Thus the isomorphism
\begin{align*}
	\F\Proj(kC_G(P)e_P,u) \cong \F\Proj(kC_G(P)\sigma(e_P),u)
\end{align*}
is an isomorphism of right $\F\Aut(L,u)_{\overline{(P,e_P,\pi)}}$-modules, as desired.
\end{proof}

%%%%%%%%%%%%%%%%%%%%%%%%%%%%%%%%%%%%%%%%%%%%%

\bigskip\noindent\textbf{Acknowledgements.}\quad The first author is supported by National Natural Science Foundation of China (12501024, 12471016), Fundamental Research Funds for the Central Universities (CCNU24XJ028), and China Postdoctoral Science Foundation (GZC20252006, 2025T001 HB). The second author gratefully acknowledges the hospitality of Central China Normal University during a visit in August 2025, when this work was initiated.

\end{document}